\newcommand{\abs}[1]{|#1|}
\newcommand{\C}{\ensuremath{\mathbb{C}^n}}
\newcommand{\R}{\ensuremath{\mathbb{R}}}
\newcommand{\Z}{\ensuremath{\mathbb{Z}}}
\renewcommand{\S}{\ensuremath{\text{Sig}_d}}
\newcommand{\diag}{\ensuremath{\text{diag}}}
\newcommand{\Rd}{\ensuremath{{\mathbb{R}^d}}}
\newcommand{\D}{\ensuremath{\mathscr{D}}}
\newcommand{\J}{\ensuremath{\mathscr{J}}}
\newcommand{\F}{\ensuremath{\mathscr{F}}}
\newcommand{\BMOWW}{\ensuremath{{\text{BMO}}_{W, W}^p}}
\newcommand{\BMOWUDt}{\ensuremath{{\text{BMO}}_{W, U, \D^t} ^p}}
\newcommand{\BMOWUAltDt}{\ensuremath{\widetilde{{\text{BMO}}_{W, U, \D^t} ^p}}}
\newcommand{\BMOWUD}{\ensuremath{{\text{BMO}}_{W, U, \D} ^p}}
\newcommand{\BMOWUAltD}{\ensuremath{\widetilde{{\text{BMO}}_{W, U, \D} ^p}}}
\newcommand{\BMOWUtwoD}{\ensuremath{{\text{BMO}}_{W, U, \D} ^2}}
\newcommand{\BMOWU}{\ensuremath{{\text{BMO}}_{W, U} ^p}}
\newcommand{\BMOWUAlt}{\ensuremath{\widetilde{{\text{BMO}}_{W, U} ^p}}}
\newcommand{\BMOWUtwo}{\ensuremath{{\text{BMO}}_{W, U} ^2}}
\newcommand{\Mn}{\ensuremath{\mathcal{M}_{n }}(\mathbb{C})}
\newcommand{\inrd}{\ensuremath{\int_{\Rd}}}
\newcommand{\tr}{\ensuremath{\text{tr}}}
\newcommand{\ip}[2]{\ensuremath{\left\langle#1,#2\right\rangle}}
\newcommand{\BMO}{\ensuremath{\text{BMO}}}
\newcommand{\Atwo}[1]{\ensuremath{\|#1 \|_{\text{A}_2} }}
\newcommand{\V}[1]{\ensuremath{\vec{#1}}}
\newcommand{\MC}[1]{\ensuremath{\mathcal{#1}}}
\newcommand{\W}[1]{\ensuremath{\widetilde{#1}}}
\numberwithin{equation}{section}
\newtheorem{thm}{Theorem}[section]
\newtheorem{lemma}[thm]{Lemma}
\newtheorem{corollary}[thm]{Corollary}
\newtheorem{proposition}[thm]{Proposition}
\newtheorem*{prop*}{Proposition}
\theoremstyle{remark}
\newtheorem*{rem*}{Remark}
\begin{document}

\title[Two  weight inequalities for commutators]{Boundedness of commutators and H${}^1$-BMO duality in the two matrix weighted setting}

\author[J. Isralowitz]{Joshua Isralowitz}

\address{%
University at Albany\\
Mathematics Department\\
1400 Washington ave. \\
Albany, NY 12222}

\email{jisralowitz@albany.edu}

\subjclass{42B20}

\begin{abstract}
In this paper we characterize the two matrix weighted boundedness of commutators with any of the Riesz transforms (when both are matrix A${}_p$ weights) in terms of a natural two matrix weighted BMO space.  Furthermore, we identify this BMO space when $p = 2$ as the dual of a natural two matrix weighted H${}^1$ space, and use our commutator result to provide a converse to Bloom's matrix A${}_2$ theorem, which as a very special case proves Buckley's summation condition for matrix A${}_2$ weights. Finally, we use our results to prove a matrix weighted John-Nirenberg inequality, and we also briefly discuss the challenging question of extending our results to the matrix weighted vector BMO setting.
\end{abstract}

\maketitle

\section{Introduction}

Let $w$ be positive a.e. on $\Rd$ and let $L^p(w)$ be the standard weighted Lebesgue space with respect to the norm \begin{equation*} \|f\|_{L^p( w)} = \left(\inrd |f(x)|^p w(x) \, dx \right)^\frac{1}{p}. \end{equation*} Furthermore, let A${}_p$ be the classical Muckenhoupt class of weights $w$ satisfying \begin{equation*} \sup_{\substack{I \subseteq \Rd \\ I \text{ is a cube}}} \left(\frac{1}{|I|} \int_I w (x) \, dx \right)\left(\frac{1}{|I|} \int_I w ^{-\frac{1}{p - 1}} (x) \, dx\right)^{p-1} < \infty. \end{equation*}

In the interesting paper \cite{B}, the author proved that if $w, u \in \text{A}_p$ then a locally integrable $b : \R \rightarrow \mathbb{C}$ satisfies $[H, b] : L^p(u) \rightarrow L^p(w)$ boundedly (where $H$ is the Hilbert transform) if and only if $b \in \text{BMO}_\nu$ where $\nu = (u w^{-1})^\frac{1}{p}$ and $b \in \text{BMO}_\nu$ if \begin{equation*} \sup_{\substack{I \subseteq \R \\ I \text{is an interval}} } \frac{1}{\nu(I)} \int_I |b(x) - b_I | \, dx  < \infty \end{equation*}  While this is well known and not surprising when $u = w$, in general this result is quite remarkable given that this characterization involves the three functions $u, v$ and $b$.

Note that Bloom's two weight characterization above was largely motivated by the question of when the Hilbert transform $H$ is bounded on matrix weighted $L^2$.  In particular, let $W : \Rd \rightarrow \Mn$ be  a matrix weight, i.e. a positive definite a.e. $\Mn$ valued function on $\Rd$ and let $L^p(W)$ be the space of $\C$ valued functions $\V{f}$ such that $\|\V{f}\|_{L^p(W)} < \infty$, where \begin{equation*} \|\V{f}\|_{L^p(W)}  = \left(\inrd |W^\frac{1}{p}(x) \V{f}(x)|^p \, dx \right)^\frac{1}{p}. \end{equation*}   Furthermore, we will say that a matrix weight $W$ is a matrix A${}_p$ weight (see \cite{R}) if it satisfies \begin{equation} \label{MatrixApDef} \sup_{\substack{I \subset \R^d \\ I \text{ is a cube}}} \frac{1}{|I|} \int_I \left( \frac{1}{|I|} \int_I \|W^{\frac{1}{p}} (x) W^{- \frac{1}{p}} (t) \|^{p'} \, dt \right)^\frac{p}{p'} \, dx  < \infty. \end{equation}

Now if $1 < p < \infty$, then it was shown in the late 1990's by the independent efforts of M. Goldberg, F. Nazarov/S. Treil, and A. Volberg (see \cite{G,NT,V}) that a CZO on $\Rd$ is bounded on $L^p(W)$ if $W$ is a matrix A${}_p$ weight.  Over a decade earlier, however, S. Bloom showed (using his two weight characterization above) in \cite{B} that if $W = U^* \Lambda U$ where $U : \R \rightarrow \Mn$ is unitary and $\Lambda = \diag (\lambda_1, \ldots, \lambda_n)$ where each $\lambda_k$ is a scalar A${}_2$ weight, then $H$ is bounded on $L^2(W)$ if for each $r$ and $j$ we have \begin{equation*} u_{rj} \in \text{BMO}_{(\lambda_r \lambda_k ^{-1})^\frac12} \text{ for } k = 1, 2, \ldots, n,\end{equation*} which given the results in \cite{G,NT,V} retranslates into a sufficient condition for $W  = U^* \Lambda U$ to be a matrix A${}_2$ weight given that $U$ is unitary and $\Lambda$ is diagonal with scalar A${}_2$ entries.

On the other hand, in the very recent preprint \cite{HLW}, the authors extended the results in \cite{B} to all CZOs on $\Rd$ for $d > 1$. Given these results, it is natural to try to prove two matrix weighted norm inequalities for commutators $[T, B]$ where $T$ is a CZO and $B$ is a locally integrable $\Mn$ valued function.  Moreover, it is natural to use these two matrix weighted norm inequalities to try to find improvements and generalizations to Bloom's matrix A${}_2$ theorem above.

The general purpose of this paper is to investigate these matters.  Before we state our results, let us rewrite Bloom's BMO condition in a way that naturally extends to the matrix weighted setting.  First, by multiple uses of the A${}_p$ property and H\"{o}lder's inequality, it is easy to see that \begin{align*} m_I \nu  & \approx (m_I u) ^\frac{1}{p} (m_I w^{1-p'})^\frac{1}{p'} \\ &  \approx (m_I u) ^\frac{1}{p} (m_I w)^{-\frac{1}{p}} \\ & \approx (m_I u^\frac{1}{p}) (m_I w^\frac{1}{p})^{-1} \end{align*} so that $b \in \BMO_\nu$ when $w$ and $u$ are A${}_p$ weights if and only if

\begin{equation*} \sup_{\substack{I \subseteq \R \\ I \text{ is a cube}} } \frac{1}{|I|} \int_I |(m_I w^\frac{1}{p}) (m_I u^\frac{1}{p})^{-1} | |b(x) - b_I | \, dx  < \infty \end{equation*}  Now if $W, U$ are matrix A${}_p$ weights, then we define $\BMOWU$ to be the space of $n \times n$ locally integrable matrix functions $B$  where there exists $\epsilon  > 0$ such that \begin{equation*} \sup_{\substack{I \subseteq \Rd \\ I \text{is a cube}} } \frac{1}{|I|} \int_I \|(m_I W^\frac{1}{p}) (B(x) - B_I)(m_I U^\frac{1}{p})^{-1} \|^{1 + \epsilon} \, dx  < \infty. \end{equation*}  We can now state the main result of the paper \begin{thm} \label{CommutatorThm} Let $W$ and $U$ be $\Mn$ valued matrix A${}_p$ weights on $\Rd$ and let $B$ be a locally integral $\Mn$ valued function.  If $R$ is any of the Riesz transforms then $[R, B]$ maps $L^p(U)$ to $L^p(W)$ boundedly if and only if $B \in \BMOWU$. \end{thm} \noindent Note that sufficiency in Theorem \ref{CommutatorThm} is new even in the scalar setting in the sense that \cite{HLW} proves that $b \in \BMO_\nu$  if \textit{all} of the Riesz transforms $R_j$ for $j = 1, \ldots, d$ are bounded from $L^p(u)$ to $L^p(w)$ when $u, w$ are scalar A${}_p$ weights.

Unfortunately we are at the moment not able to use Theorem \ref{CommutatorThm} to prove any kind of improvement to Bloom's matrix A${}_2$ theorem.      Intriguingly, however, we can easily use Theorem \ref{CommutatorThm} to prove a matrix A${}_p$ converse under vastly more general conditions. More precisely we will prove the following. \begin{thm} \label{MatrixWeightThm} Let $\Lambda$ be a matrix A${}_p$ weight and let $U$ be \textit{any} matrix function such that $W = (U^*\Lambda ^{\frac{2}{p}} U)^\frac{p}{2}$ is a matrix A${}_p$ weight.  Then  $U \in \text{BMO}_{\Lambda, W}^p $. \end{thm}

A curious application (and one that warrants further investigation into various generalizations of Theorem \ref{CommutatorThm} and Theorem \ref{MatrixWeightThm}) is when $W$ is a (given) matrix A${}_2$ weight, $p = 2, \ \Lambda = W^{-1}, \ $ and $U = W$, which in this case says $W \in \text{BMO}_{W^{-1}, W}$. As we will see later, this translates into a matrix Fefferman-Kenig-Pipher and Buckley condition on matrix A${}_2$ weights $W$.  Note that while the former is well known in the matrix setting (see \cite{TV, CW}), the latter is to the author's knowledge new.

Also, one can ask whether sufficiency in Theorem \ref{CommutatorThm} holds for general CZOs.  Before we discuss this we will need to introduce some notation.  Following the notation in \cite{LPPW}, for any dyadic grid in $\mathbb{R}$ and any interval in this grid, let  \begin{equation*} h_I ^1 = |I|^{-\frac{1}{2}} \chi_I (x), \,  \,  \,  \, \, \, \,  h_I ^0 (x) = |I|^{-\frac{1}{2}} (\chi_{I_\ell} (x) - \chi_{I_r} (x))  \end{equation*} where $I_\ell$ and $I_r$ are the left and right halves of $I$, respectively.   Now given any dyadic grid $\D$ in $\mathbb{R}^d,$  any cube $I = I_1 \times \cdots \times I_d$, and any $\varepsilon \in \{0, 1\}^{d}$, let $h_I ^\varepsilon = \Pi_{i = 1}^d h_{I_i} ^\varepsilon$.  It is then easily seen that $\{h_I ^\varepsilon: I \in \D, \  \varepsilon \in \S\}$  where $\S = \{0, 1\}^d \backslash \{\vec{1}\}$ is an orthonormal basis for $L^2(\Rd)$. Note that we will say $h_I ^\varepsilon$ is ``cancellative" if $\varepsilon \neq \vec{1}$ since in this case $\int_I h_I^\varepsilon = 0$.

For a dyadic grid $\D$ let $\text{BMO}_{\nu, \D}$ be the canonical dyadic version of $\text{BMO}_{\nu}.$ In the scalar weighted setting, sufficiency in Theorem \ref{CommutatorThm} for general CZOs was proved in \cite{HLW} using the known (see \cite{LLL}) duality   BMO${}_{\nu, \D} = (H^1 _\D(\nu))^*$ under the standard $L^2$ pairing (which is not needed in the Riesz transform case). Here,  $H^1 _\D(\nu)$ is the space of all $f$ such that $S_{\D} f \in L^1(\nu)$   where $S_{\D}$ is the dyadic square function defined by \begin{equation*}S_{\D} f (x)= \left( \sum_{\varepsilon \in \S} \sum_{I \in \D} \frac{|f_I ^\varepsilon|^2}{|I|}  \chi_I (x) \right)^\frac12. \end{equation*}

 For a matrix weight $W$ and an $\Mn$ valued function $\Phi$, let $S_{W ,\D}$ be the weighted square function defined by \begin{equation*} S_{W, \D} \Phi (x)= \left(\sum_{\varepsilon \in \S} \sum_{I \in \D} \frac{\|W^{\frac{1}{2}} (x) \Phi_I ^\varepsilon \|^2  }{|I|} \chi_I (x) \right)^\frac12 \end{equation*} and for another matrix weight $U$, let $M_U$ denote the Haar multiplier \begin{equation*} M_U \Phi = \sum_{\varepsilon \in \S} \sum_{I \in \D} \Phi_I ^\varepsilon (m_I U)^\frac12 h_I ^\varepsilon. \end{equation*}  Finally let $H^1_{W, U, \D}$ be the space of locally integral $n \times n$ matrix functions defined by \begin{equation*} H^1 _{W, U} = \{\Phi : \Rd \rightarrow \Mn \text{ s.t. }  S_{W^{-1}, \D} M_U \Phi \in L^1 \}\end{equation*} and for any $n \times n$ matricies $A, B$ let $\ip{A}{B}_{\tr}$ be the canonical Frobenius inner product defined by \begin{equation*} \ip{A}{B}_{\tr} = \tr AB^*. \end{equation*} By modifying the ideas in \cite{BW,LLL} we will prove the following matrix weighted duality result in the last section, which we hope to use (possibly in modified form) in a future paper to prove sufficiency in Theorem \ref{CommutatorThm} for general CZOs when $p = 2$.

\begin{thm} \label{H1BMOThm} $\BMOWUtwoD = (H^1 _{W, U, \D})^*$ under the canonical pairing $B(\Phi) := \ip{\Phi}{B}_{L^2}$ where the inner product is with respect to $\ip{}{}_{\tr}$ on $\Mn$.  \end{thm}

\noindent Note that it would be very interesting to try to prove a similar duality result when $p \neq 2$ (which would likely be useful in extending Theorem \ref{CommutatorThm} to general CZOs when $p \neq 2$.) Also note that unlike \cite{LLL}, which proves their duality result by using the standard idea of analyzing the weighted measure of certain level sets, we are forced to instead largely base our arguments on unweighted estimates, since the ``matrix weighted measure" of level sets (or any set for that matter) makes absolutely no sense.

 It should be noted that various different\ equivalent versions of BMO$_{\nu}$ were needed in \cite{HLW} to prove their main result.  Similarly, we will require a number of different equivalent versions of $\BMOWU$ throughout the paper.  Surprisingly, many of these various versions in the matrix weighted setting have already appeared in \cite{IKP, I2} in the special cases where either $U = W$ or when one of the matrix weights $W$ or $U$ is the identity.

 In the last section we will also use our results to prove the following matrix weighted John-Nirenberg inequality, which, except for the $\epsilon$,  extends the classical scalar weighted John-Nirenberg inequality in \cite{MW} when $p = 2$. \begin{proposition} \label{p=2JN} Let $W$ be a matrix A${}_2$ weight. Then there exists $\epsilon > 0$ such that \begin{equation*} \sup_{\substack{I \subseteq \Rd \\ I \text{ is a cube } }} \frac{1}{|I|} \int_I \|(m_I W)^{-\frac12} (B(x) - m_I B) (m_I W)^{-\frac12}\| ^{1 + \epsilon} \, dx < \infty \end{equation*}  iff \begin{equation*} \sup_{\substack{ I \subseteq \Rd \\ I \text{ is a cube }} } \frac{1}{|I|} \int_I \|W^{-\frac12 }(x) (B^*(x) - m_I B^*) (m_I W)^{-\frac12}\| ^2     \, dx < \infty. \end{equation*} \end{proposition}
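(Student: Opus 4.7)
The plan is to view the two conditions of Proposition \ref{p=2JN} as two equivalent formulations of the same two matrix weighted BMO class, with the equivalence then following from the self-improvement tools developed earlier in the paper. Using $\|A\| = \|A^*\|$, the second condition is equivalent to
\begin{equation*}
\sup_I \frac{1}{|I|} \int_I \|(m_I W)^{-\frac12}(B(x) - m_I B)\, W^{-\frac12}(x)\|^2 \, dx < \infty,
\end{equation*}
so the only remaining differences from the first condition are that one averaged $(m_I W)^{-\frac12}$ has been replaced by the pointwise $W^{-\frac12}(x)$, and the $L^{1+\epsilon}$ norm has become an $L^2$ norm.

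Next, using the matrix $A_2$ reducing-operator equivalences $(m_I W)^{\pm\frac12} \approx m_I W^{\pm\frac12}$, valid because both $W$ and $W^{-1}$ are matrix $A_2$ weights, one rewrites the first condition as the statement that $B$ lies in $\text{BMO}_{W^{-1}, W}^2$ in the author's two matrix weighted BMO notation (with $p = 2$, target weight $W^{-1}$, and domain weight $W$), and rewrites the second as the corresponding ``continuous'' tilde variant $\widetilde{\text{BMO}}_{W^{-1}, W}^2$ obtained by replacing the averaged right-hand weight with its pointwise analog. Proposition \ref{p=2JN} then reduces to the equivalence between these two BMO classes, which is exactly the kind of equivalence the paper sets up among its various equivalent versions of $\BMOWU$.

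This equivalence is the matrix weighted John--Nirenberg self-improvement developed earlier in the paper (generalizing work in \cite{IKP, I2}). From the pointwise $L^2$ formulation to the averaged $L^{1+\epsilon}$ formulation, one uses H\"older's inequality in $x$ together with the matrix $A_2$ tail control $\frac{1}{|I|} \int_I \|W^{1/2}(x)(m_I W)^{-1/2}\|^q \, dx \lesssim 1$ for some $q > 2$, with the exponent $\epsilon > 0$ appearing precisely from this $q$. In the reverse direction one uses the reverse H\"older property of matrix $A_2$ applied to $W^{-1}$ to first upgrade the $L^{1+\epsilon}$ control to $L^q$ for some $q > 2$, after which H\"older together with a pointwise bound on $\|(m_I W)^{1/2} W^{-1/2}(x)\|$ gives the $L^2$ tilde estimate. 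The main obstacle will be the careful bookkeeping of the exponents and reducing-operator equivalences, and in particular converting the symmetric placement of $(m_I W)^{-\frac12}$ on both sides of $B(x) - m_I B$ in Proposition \ref{p=2JN} into the asymmetric normalization $(m_I W^{1/p})(\,\cdot\,)(m_I U^{1/p})^{-1}$ used in the paper's standard definition of $\BMOWU$.
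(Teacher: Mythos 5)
Your reduction is exactly the paper's: transpose the second display using $\|A\| = \|A^*\|$, use Lemma \ref{RedOp-AveLem} together with the matrix $A_2$ condition (applied to both $W$ and $W^{-1}$) to replace $(m_I W)^{\pm\frac12}$ by $m_I W^{\pm\frac12}$ and vice versa, identify the first condition as $\text{BMO}_{W^{-1},W}^2$ and the second as condition (c') of Corollary \ref{BMOCont} (i.e.\ $\widetilde{\text{BMO}}_{W^{-1},W}^2$) with $(W,U,p)=(W^{-1},W,2)$, and then quote Lemma \ref{JNlemma}. That reduction is correct and is in fact the entirety of the paper's proof.

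Your concluding gloss of \emph{how} the equivalence $\text{BMO}_{W^{-1},W}^2 = \widetilde{\text{BMO}}_{W^{-1},W}^2$ is proved is, however, inaccurate for the harder direction. You say the reverse H\"older property of the matrix $A_2$ weight $W^{-1}$ ``upgrades the $L^{1+\epsilon}$ control to $L^q$ for some $q>2$,'' and that then a ``pointwise bound on $\|(m_I W)^{1/2}W^{-1/2}(x)\|$'' finishes it. Neither of these is right: reverse H\"older for the weight controls moments of $\|W^{\pm 1/2}(x)(m_I W)^{\mp 1/2}\|$ and says nothing about raising the integrability of the $B$-oscillation $\|(m_I W)^{-1/2}(B(x)-m_I B)(m_I W)^{-1/2}\|$ --- that upgrade is precisely the matrix John--Nirenberg self-improvement that needs to be proved, and $\|(m_I W)^{1/2}W^{-1/2}(x)\|$ admits no pointwise bound on $I$, only $L^q$ bounds for $q$ slightly above $2$. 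The paper's actual argument for that direction in Lemma \ref{JNlemma} is structurally different: it passes via dyadic Littlewood--Paley theory and Theorem 3.1 of \cite{NTV} to the Carleson-type condition (b) of Theorem \ref{CarEmbedThm}, and controls that using the stopping-time decomposition $\D(I)=\bigcup_j \F^j(I)$ with the geometric bound $|\bigcup\J^j(I)|\le 2^{-j}|I|$. Since you are citing the paper's result rather than reproving it, this does not break your proposal, but the mechanism you describe would not succeed as a standalone proof.
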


  Now while commutators with respect to $\C$ valued functions do not make sense, one can ask what a natural $\BMOWU$ condition for $\C$ valued functions is and whether conditions similar to the ones discussed in this paper are equivalent for $\C$ valued functions.  Unfortunately, due to a lack of symmetry and duality, this appears to be a challenging question and we refer the reader to \cite{I2} where these matters are briefly discussed in the special case when $U$ is the identity.  Despite this, we should comment that the following result (which should be thought of as a ``weak" matrix analogue of Theorem $5$ in \cite{MW}) is true and will be proved in the last section.   \begin{proposition} \label{VectorJN} Let $W$ is be matrix A${}_{p}$ weight for $1 < p < \infty$ and let $\V{f} : \Rd \rightarrow \C$. Then  $\V{f} \in \text{BMO}$ iff  \begin{equation} \label{JNVecCond} \sup_{\substack{ I \subseteq \Rd \\ I \text{ is a cube }} }  \frac{1}{|I|} \int_I |W^\frac{1}{p} (x)   (V_I (W))^{-1} (\V{f}(x) - m_I \V{f}) | ^{p} \, dx < \infty.  \end{equation} In fact, if $W$ is a matrix A${}_{p, \infty}$  weight then $\V{f} \in \text{BMO}$ implies  \eqref{JNVecCond}.   \end{proposition}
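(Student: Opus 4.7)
The plan is to pass through reducing operators. For each cube $I \subset \Rd$, let $V_I = V_I(W)$ denote the constant matrix, unique up to dimensional constants, for which $|V_I \V{e}|^p \approx |I|^{-1} \int_I |W^{1/p}(x)\V{e}|^p\, dx$ for every $\V{e} \in \C$. Two standard consequences, versions of which already appear in \cite{IKP, I2}, will be invoked: first, the matrix A${}_p$ condition \eqref{MatrixApDef} is equivalent, up to dimensional constants, to the uniform bound
\[\frac{1}{|I|} \int_I \|V_I W^{-1/p}(x)\|^{p'} \, dx \lesssim 1;\]
second, the matrix A${}_{p, \infty}$ condition supplies a matrix reverse H\"{o}lder inequality, so that for some $s' > 1$ depending only on $W$ one has
\[\left(\frac{1}{|I|} \int_I |W^{1/p}(x) \V{e}|^{ps'} \, dx\right)^{1/(ps')} \lesssim \left(\frac{1}{|I|} \int_I |W^{1/p}(x) \V{e}|^{p} \, dx\right)^{1/p}\]
uniformly in $I$ and $\V{e} \in \C$.

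For the direction $\V{f} \in \text{BMO} \Longrightarrow$ \eqref{JNVecCond}, which I carry out under the weaker matrix A${}_{p, \infty}$ hypothesis, I decompose $\V{f}(x) - m_I \V{f} = \sum_{k=1}^n (f_k(x) - m_I f_k) \V{e}_k$ in the standard basis of $\C$ and set $\V{c}_k = V_I^{-1} \V{e}_k$. Minkowski's inequality in $L^p(I, |I|^{-1}\, dx)$ gives
\[\left(\frac{1}{|I|} \int_I |W^{1/p}(x) V_I^{-1} (\V{f} - m_I \V{f})|^p \, dx\right)^{1/p} \leq \sum_{k = 1}^n \left(\frac{1}{|I|} \int_I |f_k - m_I f_k|^p\, |W^{1/p}(x) \V{c}_k|^p\, dx \right)^{1/p}.\]
H\"{o}lder with exponents $s, s'$ then factors each summand as a product of $(|I|^{-1} \int_I |f_k - m_I f_k|^{ps})^{1/(ps)}$ and $(|I|^{-1} \int_I |W^{1/p}(x) \V{c}_k|^{ps'})^{1/(ps')}$. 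The scalar John-Nirenberg inequality bounds the first factor by a constant multiple of $\|\V{f}\|_{\text{BMO}}$ for any finite $s$, while the definition of the reducing operator, combined with $|V_I \V{c}_k| = |\V{e}_k| = 1$ and the reverse H\"{o}lder estimate above, bounds the second factor by $\lesssim 1$. Summing over the $n$ components closes this direction.

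For the converse, \eqref{JNVecCond} $\Longrightarrow \V{f} \in \text{BMO}$, I use the algebraic identity
\[\V{f}(x) - m_I \V{f} = V_I W^{-1/p}(x) \cdot W^{1/p}(x) V_I^{-1} (\V{f}(x) - m_I \V{f})\]
to produce the pointwise bound $|\V{f}(x) - m_I \V{f}| \leq \|V_I W^{-1/p}(x)\| \cdot |W^{1/p}(x) V_I^{-1}(\V{f}(x) - m_I \V{f})|$. Averaging over $I$ and applying H\"{o}lder with exponents $p'$ and $p$ yields
\[\frac{1}{|I|} \int_I |\V{f}(x) - m_I \V{f}| \, dx \leq \left(\frac{1}{|I|} \int_I \|V_I W^{-1/p}(x)\|^{p'} \, dx\right)^{\!\!1/p'} \!\! \left(\frac{1}{|I|} \int_I |W^{1/p}(x) V_I^{-1} (\V{f} - m_I \V{f})|^p \, dx\right)^{\!\!1/p}.\]
The first factor is controlled by matrix A${}_p$ via the reducing-operator reformulation recalled above, the second by hypothesis \eqref{JNVecCond}, so $\V{f}$ has uniformly bounded mean oscillation and hence lies in $\text{BMO}$.

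The main obstacle is pinning down the two reducing-operator facts in the precise form stated in the opening paragraph. Neither is especially deep, but each requires the correct equivalent formulation of matrix A${}_p$ and A${}_{p, \infty}$ respectively; these formulations are available in \cite{IKP, I2} in closely related settings and I would cite rather than reprove them. All remaining steps are essentially formal manipulations once these equivalences, together with the scalar John-Nirenberg inequality, are in hand.
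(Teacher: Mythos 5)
Your proof is correct and follows essentially the same route as the paper: the converse direction (\eqref{JNVecCond} $\Rightarrow$ BMO) is identical—factor out $\|V_I W^{-1/p}(x)\|$ and apply Hölder with $p,p'$ plus the matrix $A_p$ condition—and the forward direction rests on the same two ingredients, the scalar John–Nirenberg inequality and the scalar $A_\infty$ reverse Hölder estimate for $|W^{1/p}(x)\V{e}|^p$ together with the reducing-operator bound $|V_I\V{c}_k|=1$. The only cosmetic difference is in the forward direction: you decompose $\V{f}-m_I\V{f}$ into coordinates and apply Minkowski before Hölder, whereas the paper factors out the operator norm $\|W^{1/p}(x)V_I^{-1}\|$ first and then sums over basis vectors inside the reverse Hölder step; the underlying estimates are the same.
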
 \noindent Note that we will not define the matrix A${}_{p, \infty}$ condition since we will only need well known properties about matrix A${}_{p, \infty}$ weights to prove this result.  Furthermore, it is interesting to ask whether  \eqref{JNVecCond} implies $\V{f} \in \text{BMO}$  when $W$ is a matrix A${}_{p, \infty}$ weight.

 Let us briefly comment on the ideas and techniques used in this paper.  Like \cite{HLW0, HLW}, the ideas and techniques in this paper are ``dyadic" in nature and are very different than the more classical ideas and techniques in \cite{B}.  However, since the techniques in \cite{HLW0, HLW} are obviously scalar weighted techniques, we will \textit{not} draw from them in this paper, but instead heavily rely on the ideas developed in two recent preprints, the first being \cite{IKP} by the author, H. K. Kwon, and Sandra Pott, and the second being \cite{I2} by the author.  It should be commented, however, that we will in fact use the papers \cite{HLW0, HLW} as a sort of ``guiding light" for recasting the various matrix weighted BMO conditions in \cite{IKP, I2} into two matrix weighted conditions. Also note that with this in mind, one can think of this paper as a kind of ``two weight unification" of some of the ideas and results in \cite{IKP, I2}.

\begin{comment}
Note that one can ask the obvious question of whether it is possible to redefine $\BMOWU$ in terms of Bloom's original definition. More precisely, if $\nu = \|U^\frac{1}{p} W^{-\frac{1}{p}}\|$ for matrix A${}_p$ weights $U, W$, then one can ask whether $B \in \ In the last section, we will show that this is not possible even in the very special case when $W = \text{Id}_{n \times n}$ and $B = b \text{Id}_{n \times n}$ for a scalar BMO function $b$.   More precisely, we will prove the following.  \begin{proposition} \label{BloomCounterEx} There exists a matrix A${}_2$ weight $U$ and a locally integral function $b : \mathbb{R} \rightarrow \mathbb{C}$ such that \begin{equation} \sup_{I \in \D} \frac{1}{\|U\|^\frac12 (I)} \int_I  |b(x) - m_I b| \, dx < \infty \end{equation} \label{lastcond1} but  \begin{equation} \sup_{I \in \D} \frac{1}{|I|} \int_I  \|(m_I U)^{-\frac12}  |b(x) - m_I b| \, dx = \infty \label{lastcond2} \end{equation}  so if $W = \text{Id}_{n \times n}$ and $B = b \text{Id}_{n \times n}$  then $B \not \in \BMOWUtwo$\end{proposition}
\end{comment}

 Finally, the careful reader will notice that despite its elegant appearance (from the matrix weighted $p \neq 2$ perspective), we will not actually have a need for the definition of $\BMOWU$ given and instead will work with various other equivalent definitions and show that these all coincide with $\BMOWU$.  The reader, however, should not be tricked into thinking that the original definition of $\BMOWU$ is nice looking but useless.  In fact, the original definition of $\BMOWW$ is a very natural and important BMO condition (after using Lemma \ref{RedOp-AveLem} twice) to consider when formulating and proving a $T1$ theorem regarding the boundedness of matrix kernelled CZOs on $L^p(W)$ when $W$ is a matrix A${}_p$ weight for $1 < p < \infty$ (see \cite{I2} for a precise statement and proof of such a $T1$ theorem). Interestingly note that \textit{unlike} in the scalar setting, $\BMOWW$ does not reduce to the classical unweighted John-Nirenberg space BMO (see \cite{IKP} for more information.)

 We will end the introduction by noting that despite the paper's length, it is largely self contained, and in particular we do not assume that the reader is necessarily familiar with the ideas or arguments in \cite{IKP, I2}. Furthermore, as in \cite{HLW}, we will not attempt to track the A${}_p$ dependence on any of our results with the exception of A${}_2$ dependence in Theorem \ref{H1BMOThm} (which we hope to use to prove quantitative weighted norm inequalities for commutators $[T, b]$ on $L^2(W)$ for a matrix A${}_2$ weight $W$, a scalar kernelled CZO $T$,  and a scalar function $b \in \text{BMO}$ in a forthcoming paper) and in our matrix weighted Buckley summation condition (see Proposition \ref{BuckleyProp}.)

\section{Two weight characterization of paraproducts }
As in \cite{IKP, HLW0, HLW}, we will prove sufficiency in  Theorem \ref{CommutatorThm} by proving two matrix weighted norm inequalities for dyadic paraproducts in terms of equivalent BMO conditions similar to the ones in \cite{HLW0, HLW} (and when $p = 2$ in particular prove a two matrix weighted generalization of Theorem $3.1$ in \cite{HLW0}.)   Given a matrix weight $W$, let $V_I(W, p)$ and $V_I' (W, p) $ be reducing operators satisfying $|I|^{- \frac{1}{p}} \|\chi_I W^\frac{1}{p}  \vec{e}\|_{L^p} \approx |V_I(W, p) \vec{e}|$ and $|I|^{- \frac{1}{p'}} \|\chi_I W^{-\frac{1}{p}} \vec{e}\|_{L^{p'}} \approx |V_I '(W, p)  \vec{e}|$ for any $\vec{e} \in \C$ (see \cite{G}). We will drop the $p$ dependence and simply write $V_I(W)$ instead of $V_I(W, p)$ and similarly for $V_I'(W)$.  This should not cause any confusion (and if it might we will revert to the original notation) since we will exclusively deal with matrix A${}_p$ weights.

In general these reducing operators are not unique, and note that the specific ones chosen are not important for any of our theorems.  However, note that when $p  = 2$ we may take $V_I (W, 2)$ to be the average $(m_I W)^\frac12$ and $V_I ' (W, 2)$ can be taken to be the average $(m_I W)^{-\frac12}$.  In general though, it is important to realise that these reducing operators for $p \neq 2$ are \textit{not} averages.  Despite this,  it is nonetheless very useful to think of them as appropriate averages of $W$, which is further justified by the following simple but important result proved in \cite{TV} when $p = 2$ and proved in \cite{IKP}  for general $1 < p < \infty$.   \begin{lemma}\label{RedOp-AveLem}  If $W$ is a matrix A${}_p$ weight then \begin{equation*} |  V_I '  (W) \vec{e} | \approx  |m_I (W^{-\frac{1}{p}}) \vec{e}  | \end{equation*} for any $\vec{e} \in \C$.   In particular, \begin{equation*} |m_I (W^{-\frac{1}{p}}) \vec{e}| \leq |V_I ' (W) \vec{e} | \leq \|W\|_{\text{A}_p} ^\frac{n}{p}  |m_I (W^{-\frac{1}{p}}) \vec{e}|.  \end{equation*}  \end{lemma}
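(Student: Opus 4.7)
My plan for the first (lower) inequality is a direct two-step application of Jensen's inequality that does not require the A${}_p$ hypothesis at all. By the triangle inequality for vector-valued integrals,
\begin{equation*}
|m_I(W^{-\frac{1}{p}})\vec{e}| \;\leq\; \frac{1}{|I|}\int_I |W^{-\frac{1}{p}}(x)\vec{e}|\,dx,
\end{equation*}
and Jensen applied to the convex function $t \mapsto t^{p'}$ bounds the right side by $\bigl(\frac{1}{|I|}\int_I |W^{-\frac{1}{p}}(x)\vec{e}|^{p'}\,dx\bigr)^{1/p'}$, which is $\approx |V_I'(W)\vec{e}|$ by the very definition of the reducing operator.

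For the upper bound the natural tool is the equivalent reducing-operator characterization of matrix A${}_p$, namely $\|V_I(W)V_I'(W)\|^p \lesssim \|W\|_{\text{A}_p}$ uniformly in $I$ (standard, cf.\ \cite{G}). Since $V_I(W)$ and $V_I'(W)$ are positive definite this operator-norm bound rearranges into the operator inequality
\begin{equation*}
V_I'(W)^2 \;\lesssim\; \|W\|_{\text{A}_p}^{2/p}\,V_I(W)^{-2},
\end{equation*}
and testing against $\vec{e}$ yields $|V_I'(W)\vec{e}| \lesssim \|W\|_{\text{A}_p}^{1/p}|V_I(W)^{-1}\vec{e}|$. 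In the scalar case $n=1$ this already suffices, since $V_I(w)^{-1}=(m_I w)^{-1/p} \leq m_I w^{-1/p}$ by the convexity of $t\mapsto t^{-1/p}$. For general $n$, what remains is the purely linear-algebraic comparison $|V_I(W)^{-1}\vec{e}| \lesssim \|W\|_{\text{A}_p}^{(n-1)/p}|m_I(W^{-\frac{1}{p}})\vec{e}|$.

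My plan for this last comparison is to iterate a one-dimensional argument along an adapted orthonormal basis of $\C$. At each stage, on a $k$-dimensional residual subspace, I would choose a unit vector $\vec{v}_k$ extremal for the ratio $|V_I(W)^{-1}\vec{v}|/|m_I(W^{-\frac{1}{p}})\vec{v}|$, apply a scalar-type A${}_p$/Jensen inequality in the direction $\vec{v}_k$ at a cost of one factor of $\|W\|_{\text{A}_p}^{1/p}$, and then project onto the $(k-1)$-dimensional orthogonal complement, on which the compressions of $V_I(W)$, $V_I'(W)$ and $m_I(W^{-\frac{1}{p}})$ still satisfy an A${}_p$-compatible bound. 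After $n-1$ such steps the accumulated loss is $\|W\|_{\text{A}_p}^{(n-1)/p}$ and combines with the initial factor to give the claimed $\|W\|_{\text{A}_p}^{n/p}$. The principal obstacle is precisely this restriction/compression step: restricting a matrix A${}_p$ weight to a proper subspace is not a canonically defined operation, so one must work with orthogonal projections of the reducing operators themselves and verify that the projected analogue of $\|V\cdot V'\|$ is not worse than $\|W\|_{\text{A}_p}^{1/p}$ at every stage. This careful tracking of projected structure---rather than a one-shot operator-Jensen argument, which fails because $t\mapsto t^2$ is not operator monotone and so the operator bound $(m_I W)^{-1/p} \leq m_I W^{-1/p}$ does not upgrade to the squared inequality required for vector norms---is what forces the dimensional exponent $n/p$ rather than the naive $1/p$.
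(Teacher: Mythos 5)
Your lower bound is correct and is the standard argument: the vector-valued triangle inequality gives $|m_I(W^{-1/p})\vec{e}| \leq \frac{1}{|I|}\int_I |W^{-1/p}(x)\vec{e}|\,dx$, and Jensen applied to $t\mapsto t^{p'}$ bounds this by the normalized $L^{p'}$ average, which is what $|V_I'(W)\vec{e}|$ is constructed to reproduce. Your deduction of $|V_I'(W)\vec{e}| \lesssim \|W\|_{\text{A}_p}^{1/p}|V_I(W)^{-1}\vec{e}|$ from the reducing-operator form $\|V_I(W)V_I'(W)\| \lesssim \|W\|_{\text{A}_p}^{1/p}$ of the $\text{A}_p$ condition is also sound, since for positive definite $A,B$ the bound $\|AB\|\le C$ is equivalent to the operator inequality $B^2\le C^2A^{-2}$.

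The genuine gap is the remaining claim $|V_I(W)^{-1}\vec{e}| \lesssim \|W\|_{\text{A}_p}^{(n-1)/p}|m_I(W^{-1/p})\vec{e}|$. For this you propose an iteration over a descending chain of subspaces, but you never establish the key invariant --- that the compressed reducing operators on each residual subspace still satisfy an $\text{A}_p$-compatible bound --- and you yourself flag that verification as an unresolved obstacle. That verification is the entire content of the estimate: a matrix $\text{A}_p$ weight has no canonical restriction to a proper subspace, and orthogonal compressions of reducing operators need not be reducing operators for anything. As written this is a strategy whose decisive step is conjectural, not a proof.

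For orientation, the proofs the paper cites (\cite{TV} for $p=2$, \cite{IKP} in general) do not iterate over subspaces; the exponent $n$ enters through a single determinant estimate. Writing $M:=m_I(W^{-1/p})$, the lower bound you already proved says every singular value of $V_I'(W)M^{-1}$ is $\geq 1$, so the operator norm of $V_I'(W)M^{-1}$ is at most the product of its singular values, namely $\det V_I'(W)/\det M$. Operator convexity of $t\mapsto t^{-1/p}$ for $1<p<\infty$ gives $M\ge (m_I W)^{-1/p}$, hence $\det M\ge \det(m_I W)^{-1/p}$, and (for $p=2$, where $V_I(W)=(m_I W)^{1/2}$) the ratio collapses to $\det(V_I'(W)V_I(W))\le \|V_I'(W)V_I(W)\|^n\lesssim \|W\|_{\text{A}_2}^{n/2}$, with the general $p$ case handled analogously in \cite{IKP}. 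The obstruction you correctly identify --- that $(m_I W)^{-1/p}\le m_I(W^{-1/p})$ does not upgrade to the vector-norm inequality because $t\mapsto t^2$ is not operator monotone --- is circumvented by passing to determinants, which do respect the operator order, and then recovering the norm bound from the singular-value inequalities all at once rather than direction by direction.
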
  \noindent Of course, applying this to the dual weight $U^{1-p'}$ when $U$ is a matrix A${}_p$ weight gives us that \begin{equation*} |m_I (U^{\frac{1}{p}}) \vec{e}| \approx |V_I (U)\vec{e} |. \end{equation*}

Now given a locally integrable function $B : \Rd \rightarrow \Mn$, define the dyadic paraproduct $\pi_B$ with respect to a dyadic grid $\D$ by \begin{equation} \label{ParaprodDef} \pi_B \vec{f} = \sum_{\varepsilon \in \S} \sum_{I \in \D} B_I ^\varepsilon (m_I \vec{f}) h_I ^\varepsilon \end{equation}
 where $B_I ^\varepsilon$ is the matrix of Haar coefficients of the entries of $B$ with respect to $I$ and $\varepsilon$,  and $m_I \vec{f}$ is the vector of averages of the entries of $\vec{f}$.

We will now describe some important tools that are needed to prove two matrix weighted norm inequalities for dyadic paraproducts and that will also be used throughout the paper.  First is the ``matrix weighted Triebel-Lizorkin imbedding theorem" from \cite{NT, V} in the $d = 1$ setting, and from \cite{I1} when $d > 1$, which says that if $W$ is a matrix A${}_p$ weight then \begin{equation} \label{LpEmbedding}  \|\vec{f}\|_{L^p(W)} ^p  \approx \inrd \left(\sum_{I \in \D} \sum_{\varepsilon \in \S}  \frac{| V_I(W) \vec{f}_I ^\varepsilon|^2}{|I|} \chi_I(x) \right)^\frac{p}{2} \, dx  \end{equation} where $\vec{f}_I ^\varepsilon$ is the vector of Haar coefficients of the components of $\vec{f}$.

Thanks to \eqref{LpEmbedding}, we have that \begin{equation*} \|\pi_B (U ^{-\frac{1}{p}} \vec{f})\|_{L^p(W)} ^p \approx \inrd \left(\sum_{\varepsilon \in \S} \sum_{I \in \D} \frac{| V_I(W) B_I ^\varepsilon m_I (U^{-\frac{1}{p}} \vec{f})  |^2}{|I|} \chi_I(x) \right)^\frac{p}{2} \, dx \end{equation*} which crucially allows us to reduce the two matrix weighted boundedness of $\pi_B$ to that of a sort of ``matrix weighted Carleson embedding theorem" which is much simpler to handle and can in fact be handled like it is in the matrix one weighted setting in \cite{IKP} (and will be stated momentarily).

To do this, we will need a modification of the stopping time from \cite{I1, IKP}, which can be thought of as a  matrix weighted adaption of the stopping time from \cite{KP, Pott}.  Now assume that $W$ is a matrix A${}_p$ weight and that $\lambda$ is large enough. For any cube $I \in \D$, let $\J(I)$ be the collection of maximal $J \in \D(I)$ such that either of the two conditions \begin{equation*}      \|V_J (W) V_I(W) ^{-1}\|  > \lambda \    \text{   or   }  \  \|V_J(W) ^{-1} V_I(W)\|  > \lambda,  \end{equation*}  or either of the two conditions  \begin{equation}      \|V_J (U) (V_I  (U)) ^{-1}\|  > \lambda \    \text{   or   }  \  \|V_I  (U) (V_J  (U)) ^{-1}\|  > \lambda.   \end{equation}
  Also, let $\F(I)$ be the collection of dyadic subcubes of $I$ not contained in any cube $J \in \J(I)$, so that clearly $J \in \F(J)$ for any $J \in \D(I)$.

Let $\J^0 (I) := \{I\}$ and inductively define $\J^j(I)$ and $\F^j(I)$ for $j \geq 1$ by $\J^j (I) := \bigcup_{J \in \J^{j - 1} (I)} \J(J)$ and $\F^j (I) := \bigcup_{J \in \J^{j - 1} (I)} \F(J)$. Clearly the cubes in $\J^j(I)$ for $j > 0$ are pairwise disjoint.  Furthermore, since $J \in \F(J)$ for any $J \in \D(I)$, we have that $\D(I) = \bigcup_{j = 1}^\infty \F^j(I)$.  We will slightly abuse notation and write $\bigcup \J(I)$ for the set $\bigcup_{J \in \J(I)} J$ and write $|\bigcup \J(I)|$ for $|\bigcup_{J \in \J(I)} J|$. By easy arguments in  \cite{I1}, we can pick $\lambda$   so that $|\bigcup \J ^j (I)| \leq 2^{-j} |I|$ for every $I \in \D$.

We can now state and prove the main result of this section (which of course characterizes the boundedness of $\pi_B : L^p(U) \rightarrow L^p(W)$ in terms of the matrix Haar coefficient sequence $\{B_I ^\varepsilon\}$.) Note that a similar one matrix weighted result was stated and proved in \cite{IKP}, and in particular we will heavily utilize the ideas from \cite{IKP} to prove the following result.

\begin{comment}
\begin{proposition} \label{DualBMOProp} $\MC{B}(U, W, B, p) \approx \MC{B}(W^{1-p'}, U^{1-p'}, B^*, p') $ \end{proposition}

\begin{proof} We prove that $\MC{B}(U, W, B, p) \lesssim \MC{B}(W^{1-p'}, U^{1-p'}, B^*, p')$.  The other direction will follow by obvious symmetry.  To that end \begin{align*} & \sup_{K \in \D}   \frac{1}{|K|}   \sum_{I \in \D(K)} \sum_{\varepsilon \in \S} \|V_I(W) B_I ^\varepsilon (V_K '(U)) ^{-1}\|^2 \\ & = \sup_{K \in \D}  \frac{1}{|K|} \sum_{j = 0}^\infty  \sum_{J \in \J ^{j}(K)} \sum_{I \in \F(J)} \sum_{\varepsilon \in \S} \|V_I(W) B_I ^\varepsilon (V_K '(U)) ^{-1}\|^2 \\ &  \leq \sup_{K \in \D}  \frac{1}{|K|} \sum_{j = 0}^\infty  \sum_{J \in \J ^{j}(K)} \sum_{I \in \F(J)} \sum_{\varepsilon \in \S} \|V_I(W) (V_J(W))^{-1}  \|^2 \|V_J(W)  B_I ^\varepsilon (V_I '(U)) ^{-1} \|^2 \\ &  \times \|V_I '(U) (V_J ' (U)) ^{-1} \| ^2 \| (V_J ' (U))   (V_K ' (U)) ^{-1} \|^2 \end{align*} \end{proof}
\end{comment}

   \begin{thm} \label{CarEmbedThm} Let $1 < p < \infty$ and for a sequence $\{A_I ^\varepsilon\}$ of $n \times n$ matricies  let $\MC{B}(W, U, A, p)$ be defined by \begin{equation*} \MC{B}(W, U, A, p)= \sup_{K \in \D} \frac{1}{|K|} \sum_{I \in \D(K)} \sum_{\varepsilon \in \S} \|V_I(W) A_I ^\varepsilon (V_K (U)) ^{-1}\|^2. \end{equation*}  If  $W$ is a matrix A${}_p$ weight then the following are equivalent:
\begin{itemize}
 \item[(a)]  The operator $\Pi_A ^{W, U, p}$ defined by \begin{equation*} \Pi_A ^{W, U, p} \vec{f} := \sum_{\varepsilon \in \S} \sum_{I \in \D} V_I(W) A_I ^\varepsilon  m_I ( U^{ -\frac{1}{p}} \vec{f}) h_I ^\varepsilon \end{equation*} is bounded on $L^p(\Rd;\C)$
\item[(b)]  \begin{equation*} \sup_{J \in \D} \frac{1}{|J|} \sum_{\varepsilon \in \S} \sum_{I \in \D(J)} \|V_I(W) A_I ^\varepsilon (V_I  (U))   ^{-1}  \|^2 < \infty \end{equation*}
\item[(c)] $\MC{B}(W, U, A, p) < \infty$ if $2 \leq p < \infty$, and  $B(U^{1-p'}, W^{1-p'}, A^*, p') < \infty$ if  $1 < p \leq 2$.
\item[(d)] $\Pi_{A^*}  ^{U^{1-p'}, W^{1-p'}, p'}$ is bounded on $L^{p'}(\Rd;\C)$.
\end{itemize}
Furthermore, either of the conditions $\MC{B}(W, U, A, p) < \infty$  or  $\MC{B}(U^{1-p'}, W^{1-p'}, A^*, p') < \infty$ (for any $1 < p < \infty$) implies that (b) (or equivalently (a) or (d)) is true. \end{thm}

Before we prove this result, note that for a matrix function $B$ we will write $\Pi_B ^{W, U, p}$ when the sequence of matricies is the Haar coefficients of $B$. Also, while we will not need it, note that elementary linear algebra arguments give us that $\MC{B}(W, U, A, p) < \infty$ if and only if there exists $C$ independent of $K$ where \begin{equation} \sum_{\varepsilon \in \S} \sum_{I \in \D(K)} (A_I ^\varepsilon)^* (V_I(W))^2 A_I ^\varepsilon \leq C (V_K(U))^2 \label{Buckleypneq2}\end{equation} (and in fact clearly $\MC{B}(W, U, A, p)$ is the infimum of all such $C$.)

\begin{proof}

 (b) $\Rightarrow$ (a): By dyadic Littlewood-Paley theory, we need to show that  \begin{align} \int_{\Rd} & \left( \sum_{\varepsilon \in \S} \sum_{I \in \D} \frac{ |V_I (W) A_I ^\varepsilon m_I(U^{-\frac{1}{p}} \vec{f} )| ^2 }{|I|} \chi_I(t) \right)^\frac{p}{2} \, dt
\nonumber \\ & \leq \int_{\Rd}  \left( \sum_{\varepsilon \in \S} \sum_{I \in \D} \frac{ (\|V_I (W) A_I ^\varepsilon (V_I(U))  ^{-1} \| m_I |V_I (U) U^{-\frac{1}{p}} \vec{f}|)^2 }{|I|} \chi_I(t) \right)^\frac{p}{2} \, dt \label{CarEmbedParEst}\\ & \lesssim\|\vec{f}\|_{L^p} ^p \nonumber  \end{align} for any $\vec{f} \in L^p(\Rd;\C)$.

Now let \begin{equation*} \tilde{A} = \sum_{\varepsilon \in \S} \sum_{I \in \D} \|V_I (W) A_I ^\varepsilon (V_I (U)) ^{-1} \| h_I ^\varepsilon \end{equation*} and let \begin{equation*} M_U ' \vec{f} (x) = \sup_{\D \ni I \ni x}  m_I |V_I (U) U^{-\frac{1}{p}} \vec{f}| \end{equation*} Clearly for any $\D \ni I \ni x$ we have that \begin{equation*} m_I |V_I (U) U^{-\frac{1}{p}} \vec{f}| \leq m_I M_U ' \vec{f} \end{equation*} so that \begin{align*}  \eqref{CarEmbedParEst} & \leq \int_{\Rd}  \left( \sum_{\varepsilon \in \S} \sum_{I \in \D} \frac{ (\|V_I (W) A_I ^\varepsilon (V_I (U)) ^{-1} \| m_I( M_U ' \vec{f}))^2 }{|I|} \chi_I(t) \right)^\frac{p}{2} \, dt
\\ & \lesssim \|\pi_{\tilde{A}} M_U ' \vec{f}\|_{L^p} ^p   \lesssim \|A\|_* ^p \|M_U ' \vec{f}\|_{L^p} ^p \end{align*}  where $ \|A\|_*$ is the canonical supremum from condition (b).

However, it is easy to see that \begin{equation*} \|M_U ' \|_{L^p} \lesssim \|U\|_{\text{A}_p} ^{\frac{1}{p - 1}} \end{equation*} by using some simple ideas from \cite{G} (see \cite{IKP}), which means that \begin{equation} \|\Pi_A ^{W, U, p} \|_{L^p \rightarrow L^p} \approx \|A\|_{\ast} \|U\|_{\text{A}_p} ^\frac{1}{p - 1} \label{SharpEmbedThm} \end{equation} and thus completes the proof of $a) \Rightarrow b)$.

(a) $\Rightarrow$ (b): Fixing $J \in \D$, plugging in the test functions $\vec{f} := \chi_J \vec{e}_i$ into $\Pi_A$ for any orthonormal basis $\{\vec{e}_i\}_{i = 1}^n $ of $\C$, and using $(a)$ combined with dyadic Littlewood-Paley theory and elementary linear algebra gives us that \begin{align*} \|\Pi_A ^{W, U, p}\|_{L^p \rightarrow L^p} ^p  |J|  & \gtrsim    \int_{\Rd} \left(  \sum_{\varepsilon \in \S} \sum_{I \in \D} \frac{\|V_I (W) A_I ^\varepsilon m_I (\chi_J U^{- \frac{1}{p}} )\| ^2}{|I|} \chi_I (x) \right)^\frac{p}{2} \, dx \nonumber \\ & \geq  \int_J \left( \sum_{\varepsilon \in \S} \sum_{I \in \D(J)} \frac{\|V_I (W)A_I ^\varepsilon m_I ( U^{- \frac{1}{p}} )\| ^2}{|I|} \chi_I (x) \right)^\frac{p}{2} \, dx \end{align*} which in conjunction with Lemma \ref{RedOp-AveLem} says that \begin{align*}  \sup_{J \in \D} \frac{1}{|J|} \ & \int_J  \left( \sum_{\varepsilon \in \S} \sum_{I \in \D(J)} \frac{\|V_I(W) A_I ^\varepsilon (V_I (U))^{-1} \| ^2}{|I|} \chi_I (x) \right)^\frac{p}{2} \, dx  \\ & \lesssim \sup_{J \in \D} \frac{1}{|J|} \ \int_J  \left( \sum_{\varepsilon \in \S} \sum_{I \in \D(J)} \frac{\|V_I (W) A_I ^\varepsilon V_I ' (U) \| ^2}{|I|} \chi_I (x) \right)^\frac{p}{2} \, dx  \\ & \lesssim   \sup_{J \in \D} \frac{1}{|J|} \ \int_J \left( \sum_{\varepsilon \in \S}  \sum_{I \in \D(J)} \frac{\|V_I (W)A_I ^\varepsilon m_I ( U^{- \frac{1}{p}} )\| ^2}{|I|} \chi_I (x) \right)^\frac{p}{2} \, dx \\ & \lesssim \| \Pi_A ^{W, U, p}\|_{L^p \rightarrow L^p}^p. \end{align*} Condition $(b)$ now follows immediately from Theorem $3.1$ in \cite{NTV}, so that (a) $\Leftrightarrow$ (b) for all $1 < p < \infty$.    \\

 (b) $\Leftrightarrow$ (d): To avoid confusion in the subsequent arguments, we will write $V_I(W) = V_I(W, p)$ to indicate which $p$ the $V_I(W)$ at hand is referring to.  As mentioned before, it is easy to see that $W$ is a matrix A${}_p$ weight if and only if $W^{1 - p'}$ is a matrix A${}_{p'}$ weight and the same for $U$.  Furthermore, one can easily check that we can choose $V_I(W^{1 - p'}, p') = V_I ' (W, p)$ and $V_I ' (W^{1 - p'}, p') =  V_I(W, p),$ and that the same for $U$ holds. Thus, the two equalities above combined with the matrix A${}_p$ condition gives us that \begin{align*} & \sup_{J \in \D} \frac{1}{|J|} \sum_{\varepsilon \in \S} \sum_{I \in \D(J)} \|V_I(U^{1 - p'}, p')( A_I ^\varepsilon) ^* (V_I  (W^{1 - p'}, p'))^{-1}\|^2 \nonumber \\ & \approx   \sup_{J \in \D} \frac{1}{|J|} \sum_{\varepsilon \in \S} \sum_{I \in \D(J)} \|V_I(W, p) A_I ^\varepsilon (V_I  (U, p))   ^{-1} \|^2     \end{align*}  so applying (a) $\Leftrightarrow$ (b) for the quadruplet $(W, U, A, p)$ replaced by $(U^{1-p'}, W^{1-p'}, A^*,  p')$ gives us that (a) $\Leftrightarrow$ (b) $\Leftrightarrow$ (d) for all $1 < p < \infty$. We now prove that (c) $\Rightarrow$ (b) for all $1 < p < \infty$. \\

(c) $\Rightarrow$ (b):  We will in fact show that either of the conditions $\MC{B}(W, U, A, p) < \infty$ or  $\MC{B}(U^{1-p'}, W^{1-p'}, A^*, p') < \infty$ (for any $1 < p < \infty$) implies that (b).  First assume that $\MC{B}(W, U, A, p) < \infty$.   Then by our stopping time, we have that \begin{align*}   & \sum_{\varepsilon \in \S} \sum_{I \in \D(J)}   \|V_I(W) A_I ^\varepsilon (V_I  (U))   ^{-1}  \|^2
 \\ & = \sum_{j = 1}^\infty  \sum_{\varepsilon \in \S} \sum_{K \in \J^{j - 1}(J) } \sum_{I \in \F(K)}   \|V_I(W) A_I ^\varepsilon (V_I  (U))   ^{-1}  \|^2 \\ & \leq \sum_{j = 1}^\infty  \sum_{\varepsilon \in \S} \sum_{K \in \J^{j - 1}(J) } \sum_{I \in \F(K)}   \|V_I(W) A_I ^\varepsilon (V_K(U))^{-1}\| ^2 \|V_K(U)  (V_I  (U))   ^{-1}  \|^2  \\ &     \lesssim  \sum_{j = 1}^\infty  \sum_{\varepsilon \in \S} \sum_{K \in \J^{j - 1}(J) } \sum_{I \in \D(K)}   \|V_I(W) A_I ^\varepsilon (V_K(U))^{-1}\| ^2  \\ & \lesssim \MC{B}(W, U, B, p) \sum_{j = 1}^\infty \sum_{K \in \J^{j - 1}(J) } |K| \\ & \leq |J| \MC{B}(W, U, B, p).\end{align*}   Now to prove that (b) is true when $\MC{B}(U^{1-p'}, W^{1-p'}, A^*, p') < \infty$, notice that replacing the quadruplet $(W, U, A, p)$  by $(U^{1-p'}, W^{1-p'}, A^*,  p')$ gives us that \begin{align*} & \sup_{J \in \D} \frac{1}{|J|} \sum_{\varepsilon \in \S} \sum_{I \in \D(J)} \|V_I(W, p) A_I ^\varepsilon (V_I  (U, p))   ^{-1} \|^2 \\ & \approx \sup_{J \in \D} \frac{1}{|J|} \sum_{\varepsilon \in \S} \sum_{I \in \D(J)} \|V_I(U^{1 - p'}, p')( A_I ^\varepsilon) ^* (V_I  (W^{1 - p'}, p'))^{-1}\|^2 \nonumber \\ & <   \MC{B}(U^{1-p'}, W^{1-p'}, A^*, p').       \end{align*} \noindent   We now prove that  (a) $\Rightarrow$ (c) when $2 \leq p < \infty$ and (d) $\Leftrightarrow$ (c) when $1 < p \leq  2$

%The case $1 < p < 2$ is similar to the arguement above.  In particular, \begin{align*}  \frac{1}{|J|} \sum_{I \in \D(J)} \|V_I ' B_I^* V_I \|^2
%\\ & \leq  \frac{1}{|J|} \sum_{j = 1}^\infty \sum_{K \in \J^{j - 1}(J) } \sum_{I \in \F(K)} \|V_I V_K'\| \|(V_K ')^{-1} B_I   (V_I ') ^2 B_I ^* (V_K ') ^{-1} \| \|V_K ' V_I\|  \\ & \lesssim  \lambda ^{2p'} \frac{1}{|J|} \sum_{j = 1}^\infty \sum_{K \in \J^{j - 1}(J) } \sum_{I \in \D(K)}  \|V_K^{-1} B_I ^*  V_I ^2 B_I V_K ^{-1} \| \\ & \lesssim  \lambda ^{2p'} \frac{1}{|J|} \sum_{j = 1}^\infty \sum_{K \in \J^{j - 1}(J) } |K| \\ & \lesssim  \lambda ^{2p'}  \sum_{j = 1}^\infty c^j  \lesssim \lambda ^{2p'}. \end{align*}

 (a) $\Rightarrow$ (c) when $2 \leq p < \infty$: Fix $J \in \D$ and $\vec{e} \in \C$.  If $\vec{f} = U^\frac{1}{p} \chi_J \vec{e}$, then condition (a), the definition of $V_J(U)$, and H\"{o}lder's inequality give us that \begin{align*} |J| |V_J (U) \vec{e}|^p \|\Pi_A \|_{L^p \rightarrow L^p} ^p & \gtrsim \int_{\Rd} \left( \sum_{I \in \D} \sum_{\varepsilon \in \S} \frac{|V_I (W) A_I ^\varepsilon m_I (\chi_J \vec{e}) |^2}{|I| } \chi_I (t) \right)^\frac{p}{2} \, dt \\ & \geq |J| \left[\frac{1}{|J|} \int_J \left( \sum_{\varepsilon \in \S} \sum_{I \in \D(J)} \frac{|V_I (W) A_I ^\varepsilon  \vec{e}|^2 }{|I| } \chi_I (t) \right)^\frac{p}{2} \, dt \right] \\ & \geq |J| \left[ \frac{1}{|J|} \sum_{\varepsilon \in \S} \sum_{I \in \D(J)} |V_I (W) A_I ^\varepsilon \vec{e}|^2 \right]^\frac{p}{2} \end{align*} which proves (c) after replacing $\V{e}$ with $(V_J(U))^{-1} \V{e}$, and in fact shows that (a) $\Leftrightarrow$ (b)   $\Leftrightarrow$ (c) $\Leftrightarrow$ (d) when $ 2 \leq p < \infty$.  We now complete the proof when $1 < p \leq 2$.  \\

 (d) $\Leftrightarrow$ (c) when $1 < p \leq  2$:  Since $2 \leq p' < \infty$, we have that (d) $\Leftrightarrow$ (c) when $1 \leq p < 2$ by replacing the quadruplet $(W, U, A, p)$  with $(U^{1-p'}, W^{1-p'}, A ^*, p')$ and utilizing (a) $\Rightarrow$ (c). \end{proof}

While it is clear from the proof above, we shall  point out that the sole reason for the two different conditions in (c) is that we are only able to prove that (a) $\Rightarrow \MC{B}(W, U, A, p) < \infty$ when $2 \leq p < \infty$ and (d) $\Rightarrow \MC{B}(U^{1-p'}, W^{1-p'}, A^*, p') < \infty$ when $1 <  p \leq 2$ (since $2 \leq p' < \infty$) for the simple reason that we crucially require the use of H\"{o}lder's inequality with respect to the exponent $\frac{p}{2}$ and $\frac{p'}{2}$ respectively.  Furthermore, for this reason, we have when $p = 2$ that $\MC{B}(W, U, A, 2) < \infty$ is equivalent to $\MC{B}(U^{-1}, W^{-1}, A^*, 2) < \infty$.

Moreover, it is instructive and quite interesting to compare Theorem \ref{CarEmbedThm} when $p = 2$ to Theorem $3.1$ of \cite{HLW0} in the scalar setting.  In particular it was shown in \cite{HLW0} that a scalar symbolled paraproduct $\pi_b : L^2(u) \rightarrow L^2(w)$ for two scalar A${}_2$ weights $w$ and $u$ if and only if \begin{equation} \label{HLWCond} \sup_{J \in \D}  \frac{1}{u^{-1} (J)}\sum_{\varepsilon \in \S} \sum_{I \in \D(J)} |b_I ^\varepsilon|^2 (m_I (u^{-1}))^2 m_I w < \infty.  \end{equation}

  Although we will not need it to prove the main results of this paper, we will now prove that a matrix weighted version of \eqref{HLWCond} is equivalent to the boundedness of $\Pi_B ^{W, U, 2}$ on $L^2$ (and clearly a more general statement can be said regarding similar matrix sequences that are not necessarily Haar coefficients), which of course generalizes Theorem $3.1$ in \cite{HLW0} to the matrix $p = 2$ setting.

   \begin{proposition} \label{HLWIsralProp}  $\Pi_B ^{W, U, 2}$ is bounded on $L^2$ if and only if there exists $C > 0$ independent of $J$ where \begin{equation} \label{HLWICond} \sum_{\varepsilon \in \S} \sum_{I \in \D(J)} m_I(U^{-1} ) (B_I^\varepsilon)^* (m_I W) B_I ^\varepsilon m_I (U ^{-1})  \leq C (U^{-1} (J) ). \end{equation}  \end{proposition}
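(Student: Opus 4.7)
The strategy is to use Theorem \ref{CarEmbedThm} to reduce the boundedness of $\Pi_B^{W,U,2}$ on $L^2$ to a matrix Buckley-type inequality, and to show this inequality is equivalent to \eqref{HLWICond}. The forward direction is a direct test-function argument; the reverse uses the stopping-time $\F, \J$ together with the matrix A${}_\infty$ property of $U$.

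For the forward implication, fix $J \in \D$ and $\vec{e} \in \C$ and set $\vec{g} := \chi_J U^{-1/2} \vec{e}$. Then $U^{-1/2} \vec{g} = \chi_J U^{-1} \vec{e}$, so $m_I(U^{-1/2} \vec{g}) = m_I(U^{-1}) \vec{e}$ for every $I \in \D(J)$. Orthonormality of $\{h_I^\varepsilon\}$ combined with $V_I(W) = (m_I W)^{1/2}$ at $p=2$ yields
\[ \|\Pi_B^{W,U,2} \vec{g}\|_{L^2}^2 \ge \vec{e}^{\,*} \!\left(\sum_{\varepsilon \in \S} \sum_{I \in \D(J)} m_I(U^{-1}) (B_I^\varepsilon)^* (m_I W) B_I^\varepsilon m_I(U^{-1})\right)\! \vec{e}, \]
while $\|\vec{g}\|_{L^2}^2 = \vec{e}^{\,*} U^{-1}(J) \vec{e}$; arbitrariness of $\vec{e}$ gives \eqref{HLWICond}.

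For the reverse implication, by Theorem \ref{CarEmbedThm} and the standard equivalence of operator and Hilbert--Schmidt norms on $\Mn$, it suffices to establish $\sum_{I \in \D(J),\varepsilon} (B_I^\varepsilon)^* (m_I W) B_I^\varepsilon \lesssim |J|\, m_J(U)$ uniformly in $J$. Choose $\lambda$ as in the section so that $|\bigcup \J^j(J)| \le 2^{-j}|J|$. For $K \in \D$ and $I \in \F(K)$, the stopping conditions together with Lemma \ref{RedOp-AveLem} and matrix A${}_2$ give $m_I(U^{-1}) \approx m_K(U^{-1})$ and $(m_I U) \approx (m_K U)$ as positive matrices. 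Writing
\[ (B_I^\varepsilon)^* (m_I W) B_I^\varepsilon = m_I(U^{-1})^{-1} \bigl[m_I(U^{-1}) (B_I^\varepsilon)^* (m_I W) B_I^\varepsilon m_I(U^{-1})\bigr] m_I(U^{-1})^{-1}, \]
replacing each $m_I(U^{-1})^{\pm 1}$ by $m_K(U^{-1})^{\pm 1}$, applying \eqref{HLWICond} on $\D(K) \supseteq \F(K)$, and using the A${}_2$ identities $m_K(U^{-1})^{-1} \approx m_K(U)$ and $m_K(U) m_K(U^{-1}) m_K(U) \lesssim m_K(U)$, I obtain
\[ \sum_{I \in \F(K),\varepsilon} (B_I^\varepsilon)^* (m_I W) B_I^\varepsilon \lesssim m_K(U)\, U^{-1}(K)\, m_K(U) \lesssim |K|\, m_K(U) = U(K). \]

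Summing via $\D(J) = \bigcup_{j\ge 1} \bigcup_{K \in \J^{j-1}(J)} \F(K)$ (with $\J^0(J) := \{J\}$) gives a total bound of $\sum_{j \ge 1} U(\bigcup \J^{j-1}(J))$. The matrix A${}_\infty$ property of $U$, a consequence of $U$ being matrix A${}_2$, produces a $\delta > 0$ with $U(E) \lesssim (|E|/|J|)^\delta\, U(J)$ for $E \subseteq J$; combined with $|\bigcup \J^j(J)| \le 2^{-j}|J|$ this yields $U(\bigcup \J^j(J)) \lesssim 2^{-j\delta}\, U(J)$, which sums geometrically to $\lesssim U(J) = |J|\, m_J(U)$, completing the reverse direction. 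The main obstacle is this invocation of matrix A${}_\infty$: while standard in the matrix-weighted literature, it is not developed in the excerpt and would need either a citation or a brief self-contained lemma. An alternative would be to tune $\lambda$ and the stopping density in a coupled way so that the geometric growth of $m_K(U)$ across generations is absorbed without $A_\infty$, but the $A_\infty$ route cleanly mirrors the scalar proof of Theorem 3.1 in \cite{HLW0}.
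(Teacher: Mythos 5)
Your forward direction is essentially identical to the paper's: both test against $\vec{f} = \chi_J U^{-1/2}\vec{e}$. Your reverse direction, however, diverges: the paper invokes the Culiuc--Treil matrix Carleson embedding theorem (Theorem~$1.2$ of~\cite{CT}) as a black box and then finishes with a short test-function computation, whereas you attempt a self-contained stopping-time argument. That is a legitimately different route in principle, but your version of it contains a genuine error.

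The problematic step is the ``replacing each $m_I(U^{-1})^{\pm 1}$ by $m_K(U^{-1})^{\pm 1}$'' in
\begin{equation*}
(B_I^\varepsilon)^* (m_I W) B_I^\varepsilon = m_I(U^{-1})^{-1}\,M_I\,m_I(U^{-1})^{-1}, \qquad M_I := m_I(U^{-1})(B_I^\varepsilon)^*(m_I W)B_I^\varepsilon\,m_I(U^{-1}).
\end{equation*}
You use the two-sided PSD comparison $m_I(U^{-1}) \approx m_K(U^{-1})$ (which does follow for $I \in \F(K)$ from the stopping conditions, Lemma~\ref{RedOp-AveLem}, and the matrix~A${}_2$ condition) to conclude $m_I(U^{-1})^{-1} M_I m_I(U^{-1})^{-1} \lesssim m_K(U^{-1})^{-1} M_I m_K(U^{-1})^{-1}$ uniformly in $M_I \geq 0$. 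This is false: the PSD order is preserved under a congruence $X \mapsto S^* X S$ by a \emph{fixed} $S$, but two comparable positive definite matrices $A \approx B$ do not satisfy $A M A \lesssim B M B$ for arbitrary $M \geq 0$. For instance, with $A = \mathrm{diag}(1,2)$, $B = I$, $M = \begin{pmatrix}1&1\\1&1\end{pmatrix}$, one has $c\, B M B - A^{-1} M A^{-1}$ with negative determinant for every $c > 0$. So your inequality $\sum_{I \in \F(K),\varepsilon}(B_I^\varepsilon)^*(m_I W)B_I^\varepsilon \lesssim U(K)$ is not actually established, and the subsequent A${}_\infty$ summation---which you flagged as the ``main obstacle''---is not in fact where the real gap lies (that part is fine, though unnecessary; see below).

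The stopping-time strategy can still be made to work, but one should target the scalar-valued condition~(b) of Theorem~\ref{CarEmbedThm} rather than the PSD condition~(c). Set $N_I^\varepsilon := (B_I^\varepsilon)^*(m_I W)B_I^\varepsilon$. For $I \in \F(K)$ the stopping conditions give $m_K(U^{-1})^{-1} \gtrsim m_I(U^{-1})^{-1}$, so by congruence-invariance of the PSD order (now sandwiching by the \emph{fixed} matrix $m_I(U^{-1})$),
\begin{equation*}
m_I(U^{-1})\,m_K(U^{-1})^{-1}\,m_I(U^{-1}) \gtrsim m_I(U^{-1}) \gtrsim (m_I U)^{-1},
\end{equation*}
the last step by matrix~A${}_2$. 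Pairing against $N_I^\varepsilon \geq 0$ via the trace,
\begin{equation*}
\|(m_I W)^{1/2} B_I^\varepsilon (m_I U)^{-1/2}\|^2 \leq \tr\bigl[(m_I U)^{-1} N_I^\varepsilon\bigr] \lesssim \tr\bigl[m_K(U^{-1})^{-1}\,m_I(U^{-1}) N_I^\varepsilon m_I(U^{-1})\bigr].
\end{equation*}
Summing over $I \in \F(K)$ and $\varepsilon$, using $\F(K) \subseteq \D(K)$ and \eqref{HLWICond} on $K$,
\begin{equation*}
\sum_{\varepsilon}\sum_{I \in \F(K)}\|(m_I W)^{1/2}B_I^\varepsilon(m_I U)^{-1/2}\|^2 \lesssim \tr\bigl[m_K(U^{-1})^{-1} \cdot C|K|\,m_K(U^{-1})\bigr] = Cn|K|.
\end{equation*}
Iterating via $\D(J) = \bigcup_{j\geq 1}\bigcup_{K \in \J^{j-1}(J)}\F(K)$ and using the built-in geometric decay $|\bigcup \J^{j}(J)| \leq 2^{-j}|J|$ yields condition~(b), hence boundedness by Theorem~\ref{CarEmbedThm}---no matrix A${}_\infty$ needed, since the Lebesgue-measure decay already suffices for the scalar target $\lesssim |J|$. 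This trace-based repair is genuinely self-contained, in contrast to the paper's citation of Culiuc--Treil, and is arguably the cleaner route; but as written, your PSD sandwich step is incorrect and must be replaced.
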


   \begin{proof}Assume first that $\Pi_B ^{W, U, 2}$ is bounded on $L^2$.   Using the testing function $\vec{f} = U^{-\frac12} \chi_J \vec{e}$ for any vector $\vec{e}$ gives us \begin{equation*}  \sum_{\varepsilon \in \S} \sum_{I \in \D(J)} |(m_I W)^{\frac12} B_I ^\varepsilon m_I (U ^{-1})   \vec{e} |^2 \leq C \ip{ U^{-1} (J) \vec{e}}{ \vec{e}}_{\C} \end{equation*} where $C = \|\Pi_B ^{W, U, 2}\|_{L^2 \rightarrow L^2} $.

Conversely,  by Theorem $1.2$ in \cite{CT}, \eqref{HLWICond} immediately implies that \begin{equation*} \sum_{\varepsilon \in \S} \sum_{I \in \D} \| [(B_I^\varepsilon)^* (m_I W) B_I ^\varepsilon]^\frac12 m_I (U^{-\frac12 } \vec{f })\| ^2  \leq C \|\vec{f}\|_{L^2} ^2 \end{equation*} \noindent  for some $C$ independent of $U$ and $\vec{f}$.  Plugging in test functions of the form $\vec{f} =  \chi_J \vec{e}$ for any vector $\vec{e}$  in conjunction with Lemma \ref{RedOp-AveLem}   gives \begin{align*} \sum_{\varepsilon \in \S} & \sum_{I \in \D(J)}  \|(m_I W)^{\frac12}  B_I ^\varepsilon (m_I U)^{-\frac12}  \|^2 \\ & \approx \sum_{\varepsilon \in \S} \sum_{I \in \D(J)} \|(m_I W)^{\frac12}  B_I ^\varepsilon m_I (U^{-\frac12}) \|^2 \\ & = \sum_{\varepsilon \in \S} \sum_{I \in \D(J)} \|m_I (U^{-\frac12}) (B_I ^\varepsilon)^* m_I W   B_I ^\varepsilon    m_I (U^{-\frac12}) \| \\ & = \sum_{\varepsilon \in \S} \sum_{I \in \D} \| [(B_I^\varepsilon)^* (m_I W) B_I ^\varepsilon]^\frac12 m_I (U^{-\frac12 } )\| ^2  \leq C |J|. \qedhere  \end{align*}\end{proof}

As in \cite{IKP, HLW}, we can provide a much cleaner continuous BMO condition that characterizes the boundedness of paraproducts.

\begin{corollary} \label{BMOCont} If $1 < p < \infty, W$ and $U$ are matrix A${}_p$ weights, and $\D$ is a dyadic grid, then the following are equivalent:

\begin{itemize}{}{}

\item [(a')] $\pi_B : L^p(U) \rightarrow L^p(W)$ boundedly \\
\item[(b')]$\displaystyle \sup_{J \in \D} \frac{1}{|J|} \int_J \|W^\frac{1}{p} (x) (B(x) - m_J B) (V_J(U)) ^{-1}  \|^p \, dx < \infty $
\item[(c')] $\displaystyle \sup_{J \in \D}  \frac{1}{|J|} \int_J \|U^{-\frac{1}{p}} (x) (B^* (x) - m_J B^*) (V_J ' (W))^{-1}  \|^{p'} \, dx < \infty.   $ \\

\end{itemize}
\end{corollary}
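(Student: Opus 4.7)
The plan is to reduce Corollary \ref{BMOCont} to Theorem \ref{CarEmbedThm} by establishing that (b') is equivalent to condition (b) of Theorem \ref{CarEmbedThm}, and that (c') is equivalent to condition (d). The equivalence (a') $\Leftrightarrow$ (a) is essentially immediate: since the Haar coefficients of $\pi_B \vec{f}$ on a cube $I$ are $B_I^\varepsilon m_I \vec{f}$, applying \eqref{LpEmbedding} to $\pi_B \vec{f}$ and then substituting $\vec{f} = U^{-1/p}\vec{g}$ yields $\|\pi_B(U^{-1/p}\vec{g})\|_{L^p(W)} \approx \|\Pi_B^{W,U,p}\vec{g}\|_{L^p}$, so $\pi_B:L^p(U)\to L^p(W)$ is bounded iff $\Pi_B^{W,U,p}$ is bounded on $L^p(\R^d;\C)$.

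For (b) $\Rightarrow$ (b'), I would fix $J \in \D$, set $\vec{v}_i = (V_J(U))^{-1}\vec{e}_i$ for the standard basis $\{\vec{e}_i\}$ of $\C$, and observe that the matrix norm is comparable to $\max_i |\cdot \vec{e}_i|$, so it suffices to estimate each $\int_J |W^{1/p}(x)(B(x)-m_J B)\vec{v}_i|^p\, dx$. Applying \eqref{LpEmbedding} to the $\C$-valued function $\vec{F}_i(x) = (B(x) - m_J B)\vec{v}_i \chi_J(x)$, whose Haar coefficients on $I \subseteq J$ are exactly $B_I^\varepsilon \vec{v}_i$, one converts the integral into a dyadic square function expression. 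The key estimate is the pointwise bound $|V_I(W) B_I^\varepsilon \vec{v}_i| \leq \|V_I(W) B_I^\varepsilon (V_I(U))^{-1}\| \cdot |V_I(U)\vec{v}_i|$. Decomposing $\D(J) = \bigsqcup_{k \geq 1} \F^k(J)$, for any $I \in \F(K)$ with $K \in \J^{k-1}(J)$ the stopping time guarantees $|V_I(U)\vec{v}_i| \lesssim |V_K(U)\vec{v}_i|$, so the square function is dominated generation by generation by a Carleson-type expression. Since $\{\|V_I(W)B_I^\varepsilon(V_I(U))^{-1}\|^2\}$ is Carleson by hypothesis (b), Carleson's Lemma applied on each stopping cube $K$ combined with the geometric decay $|\bigcup \J^k(J)| \leq 2^{-k}|J|$ yields the bound $\lesssim |J|$.

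For (b') $\Rightarrow$ (b), I would exploit the stopping cubes in the reverse direction: for $I \in \F(K)$ with $K \in \J^{k-1}(J)$, the stopping condition gives $\|V_I(W)B_I^\varepsilon (V_I(U))^{-1}\| \lesssim \|V_I(W)B_I^\varepsilon (V_K(U))^{-1}\|$, hence
\begin{equation*}
\sum_{I \subseteq J,\varepsilon}\|V_I(W)B_I^\varepsilon (V_I(U))^{-1}\|^2 \lesssim \sum_{k \geq 1}\sum_{K\in\J^{k-1}(J)}\sum_{I\subseteq K,\varepsilon}\|V_I(W)B_I^\varepsilon (V_K(U))^{-1}\|^2.
\end{equation*}
The inner sum over $I \subseteq K$ can be controlled by $|K|$ times the BMO constant in (b') via Hölder's inequality with exponent $p/2$ applied to the square-function expression from \eqref{LpEmbedding}, valid directly when $p \geq 2$; the case $p < 2$ will be handled through duality by first establishing (c') $\Leftrightarrow$ (d). Summing in $k$ and invoking $|\bigcup \J^{k-1}(J)| \leq 2^{-(k-1)}|J|$ completes this direction.

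Finally, (c') corresponds to the analogue of (b') for the quadruplet $(U^{1-p'}, W^{1-p'}, B^*, p')$, after using Lemma \ref{RedOp-AveLem} and the identification $V_I(W^{1-p'}, p') = V_I'(W, p)$ noted in the proof of Theorem \ref{CarEmbedThm}. Running the same stopping-time/Carleson argument on this dual quadruplet shows (c') is equivalent to condition (b) for $(U^{1-p'}, W^{1-p'}, B^*, p')$, which by Theorem \ref{CarEmbedThm} is equivalent to (d); this simultaneously supplies the missing $p < 2$ case of (b) $\Leftrightarrow$ (b'). The principal technical obstacle is orchestrating the matrix stopping time with the $L^{p/2}$ Carleson embedding while carefully tracking which reducing operator $V_I(W)$, $V_I(U)$, or their stopping-cube relatives appears on each side of the noncommutative Haar coefficients $B_I^\varepsilon$.
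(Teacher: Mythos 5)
Your overall strategy — reduce everything to the equivalences in Theorem \ref{CarEmbedThm} via \eqref{LpEmbedding} — is exactly the paper's, and the identification (a') $\Leftrightarrow$ (a) and the treatment of (c') through the dual quadruplet $(U^{1-p'}, W^{1-p'}, B^*, p')$ and Lemma \ref{RedOp-AveLem} agree with the paper. However, there is a genuine gap in how you close the circle for $1 < p < 2$. Your plan for (b') $\Rightarrow$ (b) only works when $p \ge 2$ (where H\"{o}lder with exponent $p/2 \ge 1$ applies directly), and your declared fix — ``the case $p < 2$ will be handled through duality by first establishing (c') $\Leftrightarrow$ (d)" — establishes (c') $\Leftrightarrow$ (a') for $p \le 2$, but it does \emph{not} yield (b') $\Rightarrow$ (a') for $p < 2$: to convert (b') into (c') you would already need (b') $\Rightarrow$ (b), which is precisely what you lack for $p < 2$. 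The paper closes this by a direct stopping-time argument valid for $1 < p \le 2$: writing $\D(J) = \bigsqcup_j \bigsqcup_{K \in \J^{j-1}(J)} \F(K)$, replacing $(V_I(U))^{-1}$ by $(V_K(U))^{-1}$ at the cost of a bounded factor, exploiting the sub-additivity of $t \mapsto t^{p/2}$ when $p \le 2$ to pull the sum over stopping cubes outside the $(p/2)$-power, applying (b') on each $K$ via \eqref{LpEmbedding} to get an $|K|$ bound, and then summing the geometric series $|\bigcup \J^{j-1}(J)| \lesssim 2^{-j}|J|$, which yields (b) of Theorem \ref{CarEmbedThm} after invoking Theorem 3.1 of \cite{NTV}.

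There is also a quieter issue in your proposed (b) $\Rightarrow$ (b') direction. The paper does not prove this implication from the Carleson condition directly; it proves (a') $\Rightarrow$ (b') by testing $\Pi_B^{W,U,p}$ on $\vec{g} = \chi_J U^{1/p}(V_J(U))^{-1}\vec{e}_i$, for which $m_I(U^{-1/p}\vec{g}) = (V_J(U))^{-1}\vec{e}_i$ exactly on $I \subseteq J$, and the $\mathrm{A}_p$ condition gives $\|\vec{g}\|_{L^p} \lesssim |J|^{1/p}$. Your route through the stopping time instead produces a factor $|V_K(U)(V_J(U))^{-1}\vec{e}_i|$ which can grow like $\lambda^j$ across $j$ stopping generations, competing with the $2^{-j}$ packing — so the generation-by-generation bookkeeping as you have stated it does not close. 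A direct Carleson-lemma argument \emph{can} be made to work (since $|V_I(U)\vec{v}_i|^p \approx m_I |U^{1/p}\vec{v}_i|^p$ and the scalar weight $|U^{1/p}\vec{v}_i|^p$ obeys a reverse H\"{o}lder inequality), but that uses an ingredient you never invoke, and the paper's test-function argument sidesteps the problem entirely and is cleaner. I would recommend adopting the test-function route for (a') $\Rightarrow$ (b') and the sub-additive stopping-time route for the $p < 2$ direction of (b') $\Rightarrow$ (a').
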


\begin{proof}
Assume that (a') is true.  As was mentioned before, $\pi_B : L^p(U) \rightarrow L^p(W)$  boundedly if and only if $\Pi_B ^{W, U, p}$ is bounded on $L^p$, so \eqref{LpEmbedding} gives us that

\begin{align*}    &  \int_J \|W^\frac{1}{p} (x) (B(x) - m_J B) (V_J(U)) ^{-1}  \|^p \, dx \\ & \approx   \sum_{i = 1}^n \sup_{J \in \D}  \ \int_{\Rd} |W^\frac{1}{p} (x) \chi_J (x) (B(x) - m_J B) (V_J (U)) ^{-1} \V{e}_i |^p \, dx \\ & \approx \sum_{i = 1}^n     \ \int_{\Rd} \left(\sum_{\varepsilon \in \S} \sum_{I \in \D(J)} \frac{|V_I(W) B_I ^\varepsilon (V_J (U))^{-1} \V{e}_i |^2}{|I|} \chi_I (x) \right)^\frac{p}{2} \, dx \\ & \leq \sum_{i = 1}^n   \int_{\Rd} \left(\sum_{I \in \D(J)} \sum_{\epsilon \in \S} \frac{|V_I (W) B_I ^\varepsilon m_I (U^{-\frac{1}{p}} \{ \chi_J  U^\frac{1}{p} (V_J (U)) ^{-1} \V{e}_i\}) |^2}{|I|} \chi_I (x) \right)^\frac{p}{2} \, dx  \\ & \lesssim \sum_{i = 1}^n   |J| \|\Pi_B ^{W, U, p} \chi_J (U^{\frac{1}{p}}  (V_J(U)) ^{-1} \V{e}_i)\|_{L^p} ^p  \lesssim \|\Pi_B ^{W, U, p}\|_{L^p} ^p
\end{align*}  which means that (b') is true.   Since $\Pi_B ^{W, U, p}$ is bounded on $L^p$ if and only if $\Pi_{B^*} ^{U^{1-p'}, W^{1-p'}, p'}$ is bounded on $L^{p'}$, (c') also immediately follows if $\pi_B : L^p(U) \rightarrow L^p(W)$ is bounded.

We now prove that (b') implies that $\Pi_B ^{W, U, p}$ is bounded on $L^p$ which will clearly also gives us that (c') implies that $\Pi_{B^*} ^{U^{1-p'}, W^{1-p'}, p'}$ is bounded on $L^{p'}$, so that (b') $\Rightarrow$ (a') and (c') $\Rightarrow$ (a').    Now if (b') is true then \eqref{LpEmbedding} gives us that for any $\vec{e} \in \C$ \begin{align*}  \sup_{J \in \D} \ \frac{1}{|J|} &  \int_J |W^\frac{1}{p} (x) (B(x) - m_J B) (V_J (U))^{-1} \vec{e}   |^p \, dx \\ & \approx    \sup_{J \in \D}  \ \frac{1}{|J|} \int_J \left(\sum_{\varepsilon \in \S} \sum_{I \in \D(J)} \frac{|V_I (W) B_I ^\varepsilon (V_J (U))^{-1} \vec{e}|^2}{|I|} \chi_I (x)   \right) ^\frac{p}{2} \, dx    \end{align*} and in particular if $2 \leq p < \infty$ then by H\"{o}lder's inequality we have
\begin{align*} \sup_{J \in \D} \ \frac{1}{|J|} &  \int_J |W^\frac{1}{p} (x) (B(x) - m_J B) (V_J (U))^{-1} \vec{e}   |^p \, dx \\& \gtrsim  \sup_{J \in \D}  \ \left( \frac{1}{|J|} \sum_{\varepsilon \in \S} \sum_{I \in \D(J)} |V_I (W) B_I ^\varepsilon (V_J (U))^{-1} \vec{e}|^2 \right)^\frac{p}{2}. \end{align*}
 However, if $1 < p \leq 2$ then \begin{align}
&  \frac{1}{|J|} \int_{J} \left( \sum_{\varepsilon \in \S} \sum_{I \in \D(J)} \frac{\|V_I (W) B_I ^\varepsilon (V_I(U)) ^{-1} \| ^2}{|I|} \chi_I (x)  \right) ^\frac{p}{2} \, dx \nonumber \\ & =
\frac{1}{|J|} \int_J \left( \sum_{j = 1} ^\infty \sum_{K \in \J^{j - 1} (J)} \sum_{\varepsilon \in \S} \sum_{I \in \F(K)}  \frac{\|V_I (W) B_I ^\varepsilon (V_I (U)) ^{-1} \|^2 }{|I|} \chi_I (x)  \right) ^\frac{p}{2} \, dx \nonumber \\ & \lesssim \frac{1}{|J|} \int_J \left( \sum_{j = 1} ^\infty \sum_{K \in \J^{j - 1} (J)} \sum_{\varepsilon \in \S} \sum_{I \in \F(K)}  \frac{\|V_I (W) B_I ^\varepsilon (V_K (U)) ^{-1} \|^2 }{|I|} \chi_I (x) \right) ^\frac{p}{2} \, dx \nonumber \\ & \leq \frac{1}{|J|}  \int_J \left(  \sum_{j = 1} ^\infty \sum_{K \in \J^{j - 1} (J)}  \sum_{\varepsilon \in \S} \sum_{I \in \D(K)} \frac{\|V_I(W) B_I ^\varepsilon  (V_K (U)) ^{-1} \|^2 }{|I|} \chi_I (x) \right) ^ \frac{p}{2} \, dx \nonumber \\ & \leq  \frac{1}{|J|}     \sum_{j = 1} ^\infty \sum_{K \in \J^{j - 1} (J)}  \int_K \left( \sum_{\varepsilon \in \S} \sum_{I \in \D(K)} \frac{\|V_I(W) B_I ^\varepsilon  (V_K (U)) ^{-1} \|^2 }{|I|} \chi_I (x)   \right) ^ \frac{p}{2} \, dx \nonumber \\ & \lesssim \frac{1}{|I|}   \sum_{j = 1} ^\infty \sum_{K \in \J^{j - 1}  (I)} |K|   \nonumber \\& \lesssim \sum_{j = 1}^\infty 2^{-(j - 1)} < \infty \nonumber \end{align} which by Theorem 3.1 in \cite{NTV} says that (b) in Theorem \ref{CarEmbedThm} is true, which implies that (a') is true.  \end{proof}

From now on we will say that $B \in \BMOWUAltD$ for a dyadic grid $\D$ if $B$ satisfies either of the conditions in Corollary \ref{BMOCont} with respect to $\D$,  any of the equivalent conditions in Theorem \ref{CarEmbedThm} with respect to $\D$, or \eqref{HLWICond} with respect to $\D$.  In the last section we will show that $\BMOWU$  coincides with the union of $\BMOWUAltD$ over a finite number of dyadic grids $\D$.

\section{Two weight characterization of Riesz transforms}

We will now prove Theorem \ref{CommutatorThm} but in terms of $\BMOWUAlt$, which we will define as the the union of $\BMOWUAltD$ over all dyadic grids $\D$ (which as usual will be shown to coincide with the union of $\BMOWUAltD$ over a finite number of dyadic grids $\D$). Before we do this we will need the following simple but nonetheless interesting characterization of matrix Haar multipliers. Note that the one matrix weighted characterization of these Haar multipliers was first proved in \cite{IKP} and that a sharper result (in terms of the A${}_2$ dependency) was soon after proved in \cite{BPW} when $p = 2$.    \begin{proposition} \label{HaarMultThm} Let $1 < p < \infty$ and let $W$ be a matrix A${}_p$ weight.  If $\D$ is any dyadic grid and $A := \{A_I ^\varepsilon  : I \in \D, \varepsilon \in \S\}$ is a sequence of matrices, then the Haar multiplier \begin{equation*} T_A \vec{f} := \sum_{I \in \D} \sum_{\varepsilon \in \S} A_I ^\varepsilon {\vec{f}}_I ^\varepsilon  h_I ^\varepsilon \end{equation*} is bounded from $L^p(U)$ to $L^p(W)$ if and only if \begin{equation*} \sup_{I \in \D, \varepsilon \in \S} \|V_I(W) A_I ^\varepsilon (V_I (U))^{-1}\| < \infty. \end{equation*}  \end{proposition}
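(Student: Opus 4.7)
The plan is to prove both directions by invoking the matrix weighted Triebel–Lizorkin embedding \eqref{LpEmbedding} and reducing everything to a pointwise (in $I$ and $\varepsilon$) estimate. Note that although the statement only names $W$ as a matrix A${}_p$ weight, the conclusion involves $V_I(U)$ as well, so implicitly $U$ is also taken to be matrix A${}_p$ (so that the embedding applies to $L^p(U)$ too).

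For sufficiency, assume $C := \sup_{I,\varepsilon} \|V_I(W) A_I^\varepsilon (V_I(U))^{-1}\| < \infty$. Since $T_A\vec{f}$ has Haar coefficients $A_I^\varepsilon \vec{f}_I^\varepsilon$, applying \eqref{LpEmbedding} to $T_A \vec{f}$ with respect to $W$ gives
\begin{equation*}
\|T_A \vec{f}\|_{L^p(W)}^p \approx \int_{\Rd} \left( \sum_{I\in\D}\sum_{\varepsilon\in\S} \frac{|V_I(W) A_I^\varepsilon \vec{f}_I^\varepsilon|^2}{|I|}\chi_I(x)\right)^{\frac{p}{2}}\,dx.
\end{equation*}
The pointwise bound $|V_I(W) A_I^\varepsilon \vec{f}_I^\varepsilon| \leq \|V_I(W) A_I^\varepsilon (V_I(U))^{-1}\| \, |V_I(U)\vec{f}_I^\varepsilon| \leq C |V_I(U)\vec{f}_I^\varepsilon|$ plugs directly into the integrand, and a second application of \eqref{LpEmbedding}, now for $U$ and $\vec{f}$, yields $\|T_A\vec{f}\|_{L^p(W)}^p \lesssim C^p \|\vec{f}\|_{L^p(U)}^p$.

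For necessity, we test $T_A$ on the rank one inputs $\vec{f} = h_I^\varepsilon \vec{e}$ for fixed $I\in\D$, $\varepsilon\in\S$ and $\vec{e}\in\C$. Since $\{h_J^{\varepsilon'}\}$ is an orthonormal basis, the only nonzero Haar coefficient is $\vec{f}_I^\varepsilon = \vec{e}$, so $T_A\vec{f} = A_I^\varepsilon \vec{e}\, h_I^\varepsilon$. Because $|h_I^\varepsilon(x)| = |I|^{-\frac12}$ on $I$ and vanishes off $I$, the defining property of the reducing operators gives
\begin{equation*}
\|T_A \vec{f}\|_{L^p(W)}^p = |I|^{-\frac{p}{2}} \int_I |W^{\frac{1}{p}}(x) A_I^\varepsilon \vec{e}|^p\,dx \approx |I|^{1-\frac{p}{2}} |V_I(W) A_I^\varepsilon \vec{e}|^p,
\end{equation*}
and similarly $\|\vec{f}\|_{L^p(U)}^p \approx |I|^{1-\frac{p}{2}} |V_I(U) \vec{e}|^p$. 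Writing $\|T_A\|$ for the operator norm from $L^p(U)$ to $L^p(W)$, we obtain
\begin{equation*}
|V_I(W) A_I^\varepsilon \vec{e}| \lesssim \|T_A\| \, |V_I(U)\vec{e}|
\end{equation*}
for every $\vec{e}\in\C$, and substituting $\vec{e} = (V_I(U))^{-1}\vec{e}'$ and taking the supremum over unit $\vec{e}'$ gives $\|V_I(W) A_I^\varepsilon (V_I(U))^{-1}\| \lesssim \|T_A\|$ uniformly in $I$ and $\varepsilon$.

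There is no serious obstacle here: both directions are essentially formal once \eqref{LpEmbedding} and Lemma \ref{RedOp-AveLem} are in hand. The only mild care needed is in checking that the constants in the two applications of the embedding (one for $W$, one for $U$) are independent of the sequence $\{A_I^\varepsilon\}$, which is automatic since the embedding constants depend only on the A${}_p$ characteristics of $W$ and $U$.
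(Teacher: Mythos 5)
Your proof is correct. The sufficiency direction is identical to the paper's: two applications of \eqref{LpEmbedding} plus the pointwise bound $|V_I(W)A_I^\varepsilon \vec{f}_I^\varepsilon|\leq \|V_I(W)A_I^\varepsilon(V_I(U))^{-1}\|\,|V_I(U)\vec{f}_I^\varepsilon|$. Your remark that $U$ must implicitly be matrix A${}_p$ is right; the proposition's statement has a small omission there.

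Your necessity direction, however, takes a genuinely different and leaner route than the paper. You test $T_A$ directly on $\vec{f}=h_I^\varepsilon\vec{e}$, whose only nonzero Haar coefficient is $\vec{e}$ at $(I,\varepsilon)$, so $T_A\vec{f}=A_I^\varepsilon\vec{e}\,h_I^\varepsilon$, and then both $\|T_A\vec{f}\|_{L^p(W)}$ and $\|\vec{f}\|_{L^p(U)}$ are read off from the defining property $|I|^{-1/p}\|\chi_I W^{1/p}\vec{v}\|_{L^p}\approx |V_I(W)\vec{v}|$ (and its analogue for $U$); optimizing in $\vec{e}$ finishes. The paper instead tests on $U^{-1/p}\chi_{J_0'}\vec{e}$ for a dyadic child $J_0'$ of $J_0$, applies \eqref{LpEmbedding} to the output to extract a single term, identifies $(U^{-1/p}\chi_{J_0'}\vec{e})_{J_0}^{\varepsilon'}$ with a constant multiple of $|J_0|^{1/2}m_{J_0'}(U^{-1/p})\vec{e}$, invokes Lemma \ref{RedOp-AveLem} to replace $m_{J_0'}(U^{-1/p})$ by $V_{J_0'}'(U)$, and then sums over the $2^d$ children together with the matrix A${}_p$ condition to recover $(V_{J_0}(U))^{-1}$. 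Your argument bypasses \eqref{LpEmbedding} on the output side, Lemma \ref{RedOp-AveLem}, and the summation over children, since the single Haar function test already puts everything in terms of the reducing operators; the paper's heavier averaging device is natural for uniformity with the necessity argument in Theorem \ref{CarEmbedThm}, where the input $m_I(U^{-1/p}\vec{f})$ genuinely forces one through Lemma \ref{RedOp-AveLem}, but it is not needed here. Both arguments are sound.
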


 \begin{proof} If $M = \sup_{I \in \D, \varepsilon \in \S} \|V_I(W) A_I ^\varepsilon (V_I (U))^{-1}\| < \infty$, then two applications of $(\ref{LpEmbedding})$ give us that \begin{align*} \|T_A \vec{f} \|_{L^p(W)} ^p & \approx \inrd \left(\sum_{I \in \D} \sum_{\varepsilon \in \S} \frac{|V_I (W) A_I ^\varepsilon {\vec{f}}_I ^\varepsilon |^2}{|I|} \chi_I (x) \right)^\frac{p}{2} \, dx \\ & \leq \inrd \left(\sum_{I \in \D} \sum_{\varepsilon \in \S} \frac{\|V_I(W) A_I ^\varepsilon (V_I(U)) ^{-1} \|^2 | V_I (U) {\vec{f}}_I ^\varepsilon|^2}{|I|} \chi_I (x) \right)^\frac{p}{2} \, dx \\ & \leq M^p \inrd \left(\sum_{I \in \D} \sum_{\varepsilon \in \S} \frac{ | V_I (U) {\vec{f}}_I ^\varepsilon |^2}{|I|} \chi_I (x) \right)^\frac{p}{2} \, dx \\ &  \approx M^p \|\vec{f}\|_{L^p(U)} ^p. \end{align*}

For the other direction, fix some $J_0 \in \D$  and $\varepsilon' \in \S$, and let $J_0 ' \in \D(J_0)$ with $\ell(J_0 ') = \frac{1}{2} \ell(J_0)$. Again by  \eqref{LpEmbedding} we have that \begin{equation} \label{NecHaarMultEq}  \int_{\Rd} \left(\sum_{I \in \D} \sum_{\varepsilon \in \S} \frac{|V_I(W) A_I ^\varepsilon (U^{-\frac{1}{p}} \vec{f} )_I ^\varepsilon |^2 }{|I|} \chi_I (x) \right) ^\frac{p}{2} \, dx \lesssim \|\vec{f}\|_{L^p}^p.   \end{equation}  Plugging $\vec{f} := \chi_{J_0 '} \vec{e}$ for any $\vec{e} \in \C$ into (\ref{NecHaarMultEq}) and noticing that \begin{equation*}  (U^{-\frac{1}{p}} \vec{f})_{J_0} ^{\varepsilon '} = (U^{-\frac{1}{p}} \chi_{J_0 '} \vec{e} )_{J_0} ^{\varepsilon'} = \pm 2^{-\frac{d}{2}} |J_0| ^\frac{1}{2} m_{J_0 '} (U^{- \frac{1}{p}}) \end{equation*} easily gives us (in conjunction with Lemma \ref{RedOp-AveLem}) that
\begin{align*} |V_{J_0} (W) & A_{J_0} ^{\varepsilon'} V_{J_0 '} '(U) \vec{e}  |^p   \\ & \lesssim  |V_{J_0} (W) A_{J_0} ^{\varepsilon'} m_{J_0 '} (U^{- \frac{1}{p}}) \vec{e} |^p  \\  & \approx
|J_0|^{-\frac{p}{2}} |V_{J_0} (W) A_{J_0} ^{\varepsilon '} (U^{-\frac{1}{p}} \vec{f})_{J_0} ^{\varepsilon '} \vec{e} |^p \\ & = \frac{1}{|J_0|} \inrd \left(\frac{|V_{J_0} (W) A_{J_0} ^{\varepsilon '} (U^{-\frac{1}{p}} \vec{f})_{J_0} ^{\varepsilon '} \vec{e} |^2}{|J_0|} \chi_{J_0} (x) \right)^\frac{p}{2} \, dx \\ & \leq \frac{1}{|J_0|} \inrd \left(\sum_{\varepsilon \in \S} \sum_{I \in \D} \frac{|V_{I} (W) A_{I} ^{\varepsilon } (U^{-\frac{1}{p}} \vec{f} )_{I} ^{\varepsilon } |^2}{|I|} \chi_{I} (x) \right)^\frac{p}{2} \, dx \\ & \lesssim \|T_A\|_{L^p(U) \rightarrow L^p(W)} ^p. \end{align*}   Using the definition of $V_{J_0 '} ' $ and summing over all of the $2^d$ first generation children $J_0 '$ of $J_0$ finally (after taking the supremum over $J_0 \in \D$) gives us that

\begin{equation*} \sup_{J,  \ \varepsilon } \|V_J(W) A_J ^\varepsilon (V_J (U))^{-1} \|  \lesssim  \sup_{J , \ \varepsilon } \|V_{J} (W)  A_{J} ^\varepsilon  V_{J}'  (U) \|     \lesssim  \|T_A \|_{L^p (W) \rightarrow L^p (U)}  \qedhere \end{equation*}   \end{proof}

We now prove sufficiency in Theorem \ref{CommutatorThm} with respect to $\BMOWUAlt$. As in \cite{LPPW, IKP,  HLW}. the starting point is the fact that any of the Riesz  transforms are in the $L^2-$ SOT convex hull of the so called ``Haar shifts" which are defined by $ Q_\sigma h_I ^\varepsilon = h_{\sigma(I)} ^{\sigma(\varepsilon)} $  and (slightly abusing notation in the obvious way) $\sigma : \D \times \S \rightarrow \D \times \S$ where $2\ell(\sigma(I)) =  \ell(I)$ and $\sigma(I) \subseteq I$ for each $I \in \D$.  Fixing $\sigma$ and letting $Q = Q_{\sigma}$,  it is then enough to get an $L^p(W)$ bound on each $[B, Q]$.

\begin{thm} If $W$ and $U$ are matrix A${}_p$ weights, $B$ is locally integrable,  and $R$ is any of the Riesz transforms, then $[R, B]$ is bounded from $L^p(U)$ to $L^p(W)$ if  $B \in \BMOWUAlt$.  \end{thm}

\begin{proof} As in \cite{IKP} we use the decomposition in \cite{LPPW}.   First, write \begin{equation*} B = \sum_{I' \in \D} \sum_{\varepsilon' \in \S} B_{I'} ^{\varepsilon'} h_{I'} ^{\varepsilon '}, \ \ \ \vec{f} = \sum_{I \in \D} \sum_{\varepsilon \in \S} \vec{f}_{I} ^{\varepsilon} h_{I} ^{\varepsilon }\end{equation*} so that \begin{align*} [B, Q] \vec{f} & = \sum_{I \in \D} \sum_{\varepsilon \in \S}  \left( B \vec{f}_I ^\varepsilon Q h_I ^\varepsilon - Q (B h_I ^\varepsilon) \vec{f}_I ^\varepsilon \right) \\ & = \sum_{I, I' \in \D} \sum_{\varepsilon, \varepsilon' \in \S}  \left(  B_{I'} ^{\varepsilon'}   h_{I'} ^{\varepsilon '} (Q h_I ^\varepsilon) \vec{f}_I ^\varepsilon - B_{I'} ^{\varepsilon '} (Q  h_{I'} ^{\varepsilon '}  h_I ^\varepsilon) \vec{f}_I ^\varepsilon \right)  \\ & = \sum_{I, I' \in \D} \sum_{\varepsilon, \varepsilon' \in \S}  B_{I'} ^{\varepsilon'} \left([h_{I'} ^{\varepsilon '}, Q]h_I ^\varepsilon \right) \vec{f}_{I} ^\varepsilon \end{align*}

\noindent Clearly there is no contribution if $I \cap I' = \emptyset$ and otherwise we have that

\begin{equation} \label{e.cases}
[  {h _{I'} ^{\varepsilon '}},  Q]h _{I} ^{\varepsilon}
=
\begin{cases}
0  &   I\subsetneq I'
\\
\pm\abs{I} ^{-1/2}  h ^{\sigma (\varepsilon)} _{\sigma (I)}
- Q (h ^{\epsilon'} _{I} h ^{\epsilon} _{I} )
& I=I'
\\
h ^{\varepsilon'} _{\sigma (I)}h ^{\sigma(\varepsilon)} _{\sigma(I)}
\pm \abs{I} ^{-1/2} h ^{\sigma (\varepsilon ')} _{\sigma ^2(I)}
&  I'=\sigma (I)
\\
  h ^{ \varepsilon '} _{I'} Q (h_I ^\varepsilon  ) - Q (h_I ^\varepsilon  h ^{ \varepsilon '} _{I'})
& I'\subsetneq I  \textup{ and } I' \neq \sigma(I)

\end{cases}
\end{equation}

Note that we can disregard sign changes thanks to the unconditionality of Theorem \ref{CarEmbedThm},  \eqref{LpEmbedding}, and Proposition \ref{HaarMultThm}, and we will not comment on this further in the proof.  When $I=I'$ we need to bound the two sums
\begin{equation}\sum_I\sum_{\varepsilon, \varepsilon'\neq\vec{1}}  B_{I} ^{\varepsilon'}\vec{f}_{I} ^\varepsilon |I|^{-1/2} h_{\sigma(I)}^{\sigma(\varepsilon)}  \;\textup{ and }\; Q\left(\sum_I\sum_{\varepsilon, \varepsilon'\neq\vec{1}} B_{I} ^{\varepsilon'}  \vec{f}_{I} ^\varepsilon |I|^{-1/2}h_{I}^{\psi_{\varepsilon'}(\varepsilon)}\right). \label{DiagCommTerm}\end{equation} where $\psi_{\varepsilon'}(\varepsilon)$ is the signature defined by

\begin{equation*} h_I ^{\psi_{\varepsilon'} (\varepsilon)} =  |I|^\frac{1}{2} h_I^{\varepsilon} h_I^{\varepsilon' } \end{equation*} which is obviously cancellative if and only if $\varepsilon \neq \varepsilon'$.

 However,  if $B \in \BMOWU$ then condition $(b)$ in Theorem \ref{CarEmbedThm} obviously tells us that for $\epsilon, \epsilon'$ fixed and $\tilde{J}$ being the parent of $J \in \D$

 \begin{equation*} \sup_{J \in \sigma(\D)}  \| V_{J} (W) (|\tilde{J}| ^{-\frac12} B_{\tilde{J}} ^{\epsilon'}) (V_{\tilde{J}} (U))^{-1} \| \lesssim  \sup_{J \in \sigma(\D)}  \| V_{\tilde{J}} (W) (|\tilde{J}| ^{-\frac12} B_{\tilde{J}} ^{\epsilon'}) (V_{\tilde{J}} (U)) ^{-1} \|  < \infty \end{equation*} so that the first sum in \eqref{DiagCommTerm} can be estimated in a manner that is very similar to the proof of sufficiency in Theorem \ref{HaarMultThm} (that is, using \eqref{LpEmbedding} twice).

Note that the second sum of \eqref{DiagCommTerm} when $\varepsilon \neq \varepsilon'$ is also ``Haar multiplier like" and can be estimated in exactly the same way as the first sum in \eqref{DiagCommTerm} since $Q : L^p(W) \rightarrow L^p(W)$ boundedly (see \cite{IKP}).   On the other hand, when $\epsilon = \epsilon'$ the second sum of \eqref{DiagCommTerm} becomes   \begin{equation*} Q \left(\sum_{I \in \D} \sum_{\varepsilon \in \S}  B_{I} ^{\varepsilon}  \vec{f}_{I} ^{\varepsilon} \frac{\chi_I}{|I|}  \right) = Q (\pi_{B^*} )^* \vec{f}\end{equation*}

\noindent   However, since \begin{align*} \|V_I(W, p) B_I ^\varepsilon (V_I(U, p))^{-1} \| & = \|(V_I(U, p))^{-1} (B_I ^\varepsilon)^* V_I(W, p) \| \\ & \approx   \|V_I '(U, p) (B_I ^\varepsilon)^* (V_I '(W, p))^{-1}  \| \\ & \approx   \|V_I (U^{1-p'}, p') (B_I ^\varepsilon)^* (V_I (W^{1-p'}, p'))^{-1}  \| \end{align*}

\noindent we have that $B \in \BMOWU$ if and only if $\pi_{B^*} : L^{p'} (W^{1-p'}) \rightarrow L^{p'} (U^{1 -p'}) $ is bounded, which is obviously true if and only if $(\pi_{B^*})^* : L^{p} (U) \rightarrow L^{p} (W) $ is bounded so that  $Q (\pi_{B^*} )^* :  L^{p} (U) \rightarrow L^{p} (W)$ is bounded since $Q : L^p(W) \rightarrow L^p(W)$ is bounded (see \cite{IKP}).

We now look at the case when $I'=\sigma (I)$ which clearly gives us two sums corresponding to the two terms in \eqref{e.cases}.  For the first term, we obtain the sum
\begin{align*} \sum_{I  \in \D} \sum_{\varepsilon, \varepsilon' \in \S}  B_{\sigma(I)} ^{\varepsilon'} h ^{\varepsilon'} _{\sigma (I)}h ^{{\sigma(\varepsilon)}} _{\sigma(I)} \vec{f}_{I} ^\varepsilon & =  \sum_{I  \in \D} \sum_{\varepsilon \in \S}   B_{\sigma(I)} ^{\sigma(\varepsilon)}  \vec{f}_{I} ^\varepsilon \frac{\chi_{\sigma(I)}}{|\sigma(I)|}  \\ & + \sum_{I  \in \D}  \sum_{\varepsilon \in \S} \sum_{\varepsilon' \neq \sigma(\varepsilon) }  |I|^{-\frac12} B_{\sigma(I)} ^{\varepsilon'} h_{\sigma(I)} ^ {\psi_{\varepsilon'}(\sigma(\varepsilon))} \vec{f}_{I} ^\varepsilon.\end{align*}

\noindent However, a simple computation gives us  \begin{equation*} \sum_{I  \in \D} \sum_{\varepsilon \in \S}   B_{\sigma(I)} ^{\sigma(\varepsilon)}  \vec{f}_{I} ^\varepsilon \frac{\chi_{\sigma(I)}}{|\sigma(I)|}    = (\pi_{B^*} )^* Q \vec{f} \end{equation*} which is bounded from $ L^{p} (U)$ to $ L^{p} (W)$.  Also, the second sum is again  ``Haar multiplier like" and can be estimated in easily in a manner that is similar to the proof of sufficiency for Theorem \ref{HaarMultThm}.

 Furthermore, for the second sum in the two terms when $I' = \sigma(I)$, we need to bound
\begin{equation*} \sum_{I \in \D} \sum_{\varepsilon, \varepsilon' \in \S}  B_{\sigma(I)} ^{\varepsilon'} \abs{I} ^{-1/2} h ^{\sigma (\varepsilon ')} _{\sigma ^2(I)}  \vec{f}_{I} ^\varepsilon \end{equation*} which yet again is  ``Haar multiplier like" and can be estimated in a manner that is similar to the proof of sufficiency for Theorem \ref{HaarMultThm}

To finally finish the proof of sufficiency we bound the triangular terms.  First, if $I \supsetneq I'$ then obviously $h_I ^\varepsilon$ is constant on $I'$.  Thus,  \begin{align*}  \sum_{I'\in \D} \sum_{I \supsetneq I'} \sum_{\varepsilon, \varepsilon' \in \S}  B_{I'} ^{\varepsilon'} Q (h_I ^\varepsilon  h ^{ \varepsilon '} _{I})  \vec{f}_I ^\varepsilon & =  \sum_{I' \in \D} \sum_{ \varepsilon'  \in \S} B_{I'} ^{\varepsilon'} Q(h ^{\varepsilon '} _{I'}) \sum_{I \supsetneq I'} \sum_{\varepsilon \in \S}    \vec{f}_{I} ^\varepsilon h_I ^\varepsilon  \\ & = \sum_{I' \in \D}\sum_{ \varepsilon' \in \S} B_{I'} ^{\varepsilon'} Q h_{I'} ^{\varepsilon'} m_{I'} \vec{f}  = Q \pi_B \vec{f} \end{align*}

\noindent Now clearly $h ^{ \varepsilon '} _{I'} Q (h_I ^\varepsilon  ) = 0$ if $I' \cap \sigma(I) = \emptyset$. Furthermore, since  $I \supsetneq I'$  and $I' \neq \sigma(I)$, we must have $\sigma(I) \supsetneq I'$  so that  \begin{align*}  \sum_{I'\in \D} \sum_{I \supsetneq I'} \sum_{\varepsilon, \varepsilon' \in \S}  B_{I'} ^{\varepsilon'} h ^{ \varepsilon '} _{I'} Q (h_I ^\varepsilon  ) \vec{f}_{I} ^\varepsilon  & = \sum_{I'\in \D} \sum_{\varepsilon' \in \S}  B_{I'} ^{\varepsilon'} h ^{ \varepsilon '} _{I'} \sum_{I: \sigma(I) \supsetneq I'}  \sum_{ \varepsilon \in \S} h_{\sigma(I)}  ^{\sigma(\varepsilon)}  \vec{f}_{I} ^\varepsilon \\ & = \sum_{I' \in \D} \sum_{ \varepsilon' \in \S} B_{I'} ^{\varepsilon'}  h_{I'} ^{\varepsilon'} m_{I'} (Q\vec{f}) \\ & = \pi_B Q \vec{f} \end{align*}

\noindent which is obviously bounded from $ L^{p} (U)$ to $ L^{p} (W)$. \end{proof}

Let us make one important remark regarding the above theorem.  A knowledgable reader might wonder why we have not utilized the by now classical Hyt\"{o}nen decomposition theorem (see \cite{H}) to prove sufficiency in Theorem \ref{CommutatorThm} for general CZOs (which was done in \cite{HLW} in the scalar setting).  First, this would require one to prove a two matrix weighted H${}^1$-BMO duality result when $p \neq 2$, which while possible, seems quite tricky to even formulate.  Second, and perhaps more interestingly, it appears to be rather difficult, even when $p = 2$, to prove sub-exponential matrix weighted bounds for Haar shifts (in terms of their complexity).  Thus, even proving sufficiency in Theorem \ref{CommutatorThm} for general CZOs when $p = 2$ looks to be highly nontrivial. Intriguingly,  note that the boundedness of general ``cancellation CZOs" (i.e. CZOs where $T1 = T^*1 = 0)$ on matrix weighted $L^p$ was proved in \cite{NT} by utilizing pre-Hyt\"{o}nen probabilistic surgical ideas that remove singularities in a way that is similar to Hyt\"{o}nen's arguments, but does \textit{not} involve a reorganization into Haar shifts.  Furthermore, note that the author used similar pre-Hyt\"{o}nen probabilistic surgical ideas in \cite{I2} to prove matrix weighted bounds for certain matrix kernelled CZOs.

 We now prove necessity in terms of \BMOWUAlt.  As in \cite{IKP}, we can use the simple ideas in \cite{J} to prove necessity for a wider class of CZOs than just the Riesz transforms.  More precisely,

 \begin{thm} \label{RieszTransNec}  Let $K : \mathbb{R}^d \backslash \{0\} \rightarrow \mathbb{R}^d$  be not identically zero, be homogenous of degree $-d$, have mean zero over the unit sphere $\partial \mathbb{B}_d$, and satisfy $K \in C^{\infty} (\partial \mathbb{B}_d)$ (so in particular $K$ could be any of the Riesz kernels).  If $T$ is the (convolution) CZO associated to $K$, then we have that $[T, B]$ being bounded from $L^p(U)$ to $L^p(W)$ implies that $B \in \BMOWUAlt$.   \end{thm}

\begin{proof}  We will prove (b') in Corollary \ref{BMOCont}.   By assumption, there exists $z_0 \neq 0 $  and $\delta > 0$ where $\frac{1}{K(x)}$ is smooth on $|x - z_0| < \sqrt{d} \delta$, and thus can be expressed as an absolutely convergent Fourier series \begin{equation*} \frac{1}{K(x)} = \sum a_k e^{i v_k \cdot x} \end{equation*} \noindent for $|x - z_0| < \sqrt{d} \delta$ (where the exact nature of the vectors $v_k$ is irrelevant.)  Set $z_1 = \delta^{-1} z_0$.  Thus, if $|x - z_1| < \sqrt{d}$, then we have by homogeneity

\begin{equation*} \frac{1}{K(x)} = \frac{\delta^{-d}}{K(\delta x)} = \delta^{-d} \sum a_n e^{i v_k \cdot (\delta x)}. \end{equation*} \noindent Now for any cube $Q = Q(x_0, r)$ of side length $r$ and center $x_0$, let $y_0 = x_0 - rz_1$ and $Q' = Q(y_0, r)$ so that $x \in Q$ and $y \in Q'$ implies that \begin{equation*} \left|\frac{x - y}{r} - z_1\right|  \leq \left| \frac{x - x_0}{r} \right| + \left| \frac{y - y_0}{r} \right|\leq \sqrt{d}. \end{equation*}

Let \begin{equation*} S_Q (x) = \chi_Q (x) \frac{(W^\frac{1}{p}(x)  (B(x) - m_{Q'} B ) (V_Q (U)) ^{-1}   )^* } {\| W^\frac{1}{p}(x)  (B(x) - m_{Q'} B ) (V_Q (U)) ^{-1}   \|} \end{equation*}  so that for $x \in Q$ \begin{align}\frac{1}{r^d} &  \left\|\inrd  W^\frac{1}{p}(x)  (B(x) -  B(y) ) (V_Q (U)) ^{-1}  \frac{r^d K(x - y)}{K(\frac{x-y}{r})}  S_Q (x)  \chi_{Q'} (y) \, dy  \right\| \label{CommEst1} \\ & =   \frac{1}{r^d} \left\| \int_{Q'}   W^\frac{1}{p}(x)  (B(x) -  B(y) ) (V_Q (U)) ^{-1}  \right.   \nonumber \\ & \qquad \left. \times \frac{(W^\frac{1}{p}(x)  (B(x) - m_{Q'} B ) (V_Q (U)) ^{-1}   )^* } {\| W^\frac{1}{p}(x)  (B(x) - m_{Q'} B ) (V_Q (U)) ^{-1}   \|}  \, dy \right\| \nonumber \\ &  = \| W^\frac{1}{p}(x)  (B(x) - m_{Q'} B ) (V_Q (U)) ^{-1} \|. \nonumber \end{align}

However, \begin{align*} & \eqref{CommEst1} \\ &  \lesssim \sum_k |a_k| \left\|  \left(\inrd W^\frac{1}{p}(x)  (B(x) - B(y) ) K(x - y)  e^{- i \frac{\delta}{r} v_k \cdot y} (V_Q (U)) ^{-1}  \chi_{Q'} (y) \, dy \right) \right. \nonumber \\ & \qquad \times \left. S_Q (x)  e^{ i \frac{\delta}{r} v_k \cdot x} \right\|  \\ & \leq  \sum_k  |a_k| \left\|  \left(\inrd W^\frac{1}{p}(x)  (B(x) -  B(y) ) K(x - y)  e^{- i \frac{\delta}{r} v_k \cdot y} (V_Q (U)) ^{-1}  \chi_{Q'} (y) \, dy \right) \right\|
 \\ & \lesssim  \sum_k  \sum_{j = 1}^n |a_k| \left\|  (W^\frac{1}{p} [T, B] (g_k \vec{e}_j)) (x)  \right\| \end{align*} where \begin{equation*} g_k (y) =   e^{ -i \frac{\delta}{r} v_k \cdot y } (V_Q (U)) ^{-1}  \chi_{Q'} (y) \end{equation*}  and where the second inequality follows from the fact that $\| S_Q(x)  e^{ i \frac{\delta}{r} v_k \cdot x }\| \leq 1$ for a.e. $x \in \Rd$.

But as $|x_0 - y_0 | = r \delta^{-1} z_0$, we can pick some $C > 1$ only depending on $K$ where  $\tilde{Q} = Q(x_0, C r) $ satisfies $Q \cup Q' \subseteq \tilde{Q}$.  Combining this with the previous estimates, we have from the absolute summability of the $a_n's $ and the boundedness of $[T, B]$ from $L^p(U)$ to $L^p(W)$ that

$\displaystyle \begin{aligned} & \left(\int_{Q} \|  W^{\frac{1}{p}} (x) (B(x) - m_{Q'} B ) (V_Q (U)) ^{-1}\| ^p \, dx \right) ^{\frac{1}{p}} \nonumber \\ &   \leq \sum_k \sum_{j = 1}^n |a_k| \|  (W^\frac{1}{p} (x) [T, B] (g_k \vec{e}_j))     \|_{L^p}  \\ & \lesssim   \sup_k  \sum_{j = 1}^n \| U^{\frac{1}{p}} g_k \vec{e_j} \|_{L^p } \\ & \leq \sum_{j = 1}^n \|\chi_{Q'}  U^{\frac{1}{p}} (V_Q (U)) ^{-1} \vec{e}_j\|_{L^p} \\ & \lesssim   |Q|^\frac{1}{p}  \end{aligned} $

\noindent since the A${}_p$ condition gives us that

 $\displaystyle \begin{aligned} \sum_{j = 1}^n \||Q|^{-\frac{1}{p}}\chi_{Q'}  U^{\frac{1}{p}} (V_Q (U)) ^{-1} \V{e}_j \|_{L^p} & \lesssim   \sum_{j = 1}^n \||\tilde{Q}|^{-\frac{1}{p}} \chi_{\tilde{Q}} (V_Q (U)) ^{-1} U^{\frac{1}{p}}  \V{e}_j \|_{L^p}  \\  &  \lesssim \sum_{j = 1}^n \| |\tilde{Q}|^{-\frac{1}{p}} \chi_{\tilde{Q}} V_{\tilde{Q}} ' (U)  U^{\frac{1}{p}}  \vec{e}_j \ \|_{L^p} \\ & \lesssim \|V_{\tilde{Q}} (U)  V_{\tilde{Q}} ' (U) \|  \lesssim \|U\|_{\text{A}_p} ^\frac{1}{p}.\end{aligned}$ \\

Finally, we can use a simple argument from \cite{IM} to get

  $\displaystyle \begin{aligned} & \left(\int_{Q} \|  W^{\frac{1}{p}} (x) (B(x) - m_{Q} B ) (V_Q (U)) ^{-1}\| ^p \, dx \right) ^{\frac{1}{p}}  \\    & \leq  \left(\int_{Q} \|  W^{\frac{1}{p}} (x)  (B(x) - m_{Q'} B ) (V_Q (U)) ^{-1}\| ^p \, dx \right) ^{\frac{1}{p}} \\ & + \left(\int_{Q} \|  W^{\frac{1}{p}} (x)  (m_{Q} B - m_{Q'} B ) (V_Q (U)) ^{-1}\| ^p \, dx \right) ^{\frac{1}{p}}\end{aligned} $

    \noindent and

     $\displaystyle \begin{aligned} & \left(\int_{Q} \|  W^{\frac{1}{p}} (x)  (m_{Q} B - m_{Q'} B ) (V_Q (U)) ^{-1}\| ^p \, dx \right) ^{\frac{1}{p}} \\ & =  \left(\frac{1}{|Q|} \int_{Q} \left\| \frac{1}{|Q|} \int_{Q}  W^{\frac{1}{p}} (x)  ( B(y) - m_{Q'} B ) (V_Q (U)) ^{-1}  \, dy \right\| ^{p} \, dx \right) ^\frac{1}{p} \\ & \leq \left(\frac{1}{|Q|} \int_{Q} \left( \frac{1}{|Q|} \int_{Q} \| W^{\frac{1}{p}} (x) W^{-\frac{1}{p}} (y)\| \, \right.  \right.\\ & \qquad \times \left. \left. \|  W^{\frac{1}{p}} (y) ( B(y) - m_{Q'} B )(V_Q (U)) ^{-1} \|  \, dy \right) ^{p} \, dx \right) ^\frac{1}{p}     \\ & \leq \left(\frac{1}{|Q|} \int_{Q} \left(\frac{1}{|Q|} \int_{Q}  \| W^{\frac{1}{p}} (x) W^{-\frac{1}{p}} (y) \|^{p'} \, dy \right)^\frac{p}{p'} \, dx \right)^\frac{1}{p}  \\ & \qquad \times  \left(\frac{1}{|Q|} \int_Q  \| W^{\frac{1}{p}}(y)(B(y) - m_{Q'} B )(V_Q (U)) ^{-1} \|^{p} \, dy \right) ^\frac{1}{p} \\ & \leq \|W\|_{\text{A}_p}  \left(\frac{1}{|Q|} \int_Q  \| W^{\frac{1}{p}}(y)(B(y) - m_{Q'} B ) (V_Q (U)) ^{-1} \|^{p} \, dy \right) ^\frac{1}{p}. \qedhere
\end{aligned} $
\end{proof}

\section{H${}^1$-BMO duality when $p = 2$}

In this section we will prove Theorem \ref{H1BMOThm}.  Note that a similar unweighted matrix result was proved in \cite{BW}, and like the proof in \cite{BW}, our proof will also be a matrix extension of the proof in \cite{LLL} with the major difference being that our proof will solely utilize (b) in Theorem \ref{CarEmbedThm} (rather than in \cite{LLL} where condition (c) in the scalar setting is used.)   Furthermore, while we are only interested in the sequence space defined by $\BMOWUtwoD$, it is clear that our proof can be modified to provide a genuine matrix weighted version of the H${}^1$-BMO duality result in \cite{BW}.\\

\noindent \textit{Proof of Theorem \ref{H1BMOThm}} :
Note that for convenience we will write $S_{W^{-1}}$ for $S_{W^{-1}, \D}$.  Also note that throughout the proof we will track the A${}_2$ characteristic dependency on $W$ and $U$, and in particular write ``$A \lesssim B$" to denote that $A \leq C B$ for some unimportant constant $C$ that is independent of $W$ and $U$.  First we prove that every $B \in \BMOWUtwoD$ defines a bounded linear functional on $H^1_{W, U, \D}$. To that end,

  $\displaystyle \begin{aligned} |\ip{\Phi}{B}_{L^2}| & \leq \sum_{\varepsilon \in \S} \sum_{I \in \D} |\tr \{\Phi_I ^\varepsilon (B_I ^\varepsilon) ^*\}| \nonumber \\ & = \sum_{\varepsilon \in \S} \sum_{I \in \D} |\tr \{(m_I W)^{-\frac12} (M_U \Phi)_I  ^\varepsilon ((m_I W)^{\frac12} B_I ^\varepsilon (m_I U)^{-\frac12} )^*\}| \nonumber  \\ & \leq \sum_{\varepsilon \in \S} \sum_{I \in \D} \|(m_I W)^{-\frac12} (M_U \Phi)_I  ^\varepsilon \| \|(m_I W)^{\frac12} B_I ^\varepsilon (m_I U)^{-\frac12} \|. \nonumber \end{aligned}$ \\

  As before let $M$ be the unweighted Hardy-Littlewood maximal function and define the sets $\Omega_k, \W{\Omega}_k,$ and $B_k$ by

   $\displaystyle \begin{aligned} & \Omega_k  = \{x \in \Rd : S_{W^{-1}} (M_U \Phi) (x) > 2^k\}, \\ &  B_k = \{I \in \D : |I \cap \Omega_k| > \frac12 |I| \text{ and } |I \cap \Omega_{k +1}| \leq \frac12 |I| \},  \\ & \W{\Omega}_k = \{ x \in \Rd : M(1_{\Omega_k}) > \frac12 \}.  \end{aligned}$ \\

Clearly $\Omega_k \subseteq \W{\Omega} _k$.  Furthermore if $x \in I$ and $I \in B_k$ then  $ M(1_{\Omega_k}) (x) > \frac{|I \cap \Omega_k|}{|I|} > \frac12 $ so that $I \in B_k$ implies that $I \subseteq \W{\Omega}_k$. Since $S_{W^{-1}} M_U \Phi \in L^1$ we have that $I \in B_k$ for some $k \in \Z$ if $\Phi_I ^\varepsilon \neq 0$ for some $\varepsilon \in \S$.  In particular, since $U$ and $W$ are positive definite a.e., we have that $ S_{W^{-1}} (M_U \Phi) (x) > 0$ when $\Phi_I ^\varepsilon \neq 0$, which combined with the fact that $S_{W^{-1}} (M_U \Phi)  \in L^1$ easily implies the claim.

Thus, if $\W{I}$ denotes the collection of maximal $I \in B_k$ then we have by maximality and two uses of the Cauchy-Schwarz inequality\begin{align} |\ip{\Phi}{B}_{L^2}| & \leq \sum_{\varepsilon \in \S} \sum_{k \in \Z} \sum_{\W{I} \in B_k}  \sum_{\substack{I \subseteq \W{I} \\ I \in B_k}} \|(m_I W)^{-\frac12} (M_U \Phi)_I  ^\varepsilon \| \|(m_I W)^{\frac12} B_I ^\varepsilon (m_I U)^{-\frac12} \| \nonumber \\ & \leq  \sum_{\varepsilon \in \S} \sum_{k \in \Z} \sum_{\W{I} \in B_k} \left(\sum_{\substack{I \subseteq \W{I} \\ I \in B_k}} \|(m_I W)^{-\frac12} (M_U \Phi)_I  ^\varepsilon \|^2\right)^\frac12 \\ & \qquad \times   \left(\sum_{\substack{I \subseteq \W{I} \\ I \in B_k}} \|(m_I W)^{\frac12} B_I ^\varepsilon (m_I U)^{-\frac12} \|^2 \right)^\frac12 \nonumber \\ &  \leq  \|B\|_{\BMOWUtwo} \sum_{\varepsilon \in \S} \sum_{k \in \Z} \sum_{\W{I} \in B_k} |\W{I}|^\frac12  \left(\sum_{\substack{I \subseteq \W{I} \\ I \in B_k}} \|(m_I W)^{-\frac12} (M_U \Phi)_I  ^\varepsilon \|^2\right)^\frac12 \nonumber \\ & \leq \|B\|_{\BMOWUtwo} \sum_{\varepsilon \in \S} \sum_{k \in \Z}  |\W{\Omega}_k| ^\frac12 \left(\sum_{ I \in B_k} \|(m_I W)^{-\frac12} (M_U \Phi)_I  ^\varepsilon \|^2\right)^\frac12. \label{estPhi}\end{align}

We now show that \begin{equation} \label{estBk} \sum_{\varepsilon \in \S} \sum_{ I \in B_k} \|(m_I W)^{-\frac12} (M_U \Phi)_I  ^\varepsilon \|^2 \lesssim \Atwo{W} 2^{2k }|\W{\Omega}_k| \end{equation} where the implied constant is independent of $W$.  To that end, we have \begin{equation*} \int_{\W{\Omega}_k \backslash \Omega_{k + 1}} \left(S_{W^{-1}} (M_U \Phi) (x)\right)^2 \, dx \leq 2^{2k + 2} |\W{\Omega}_k \backslash \Omega_{k + 1}| \leq 2^{2k + 2} |\W{\Omega}_k | \end{equation*} while if $\{\V{e}_j\}_{j = 1}^n$ is any orthonormal basis of $\C$ and we define \begin{equation*} W_I (x) = (m_I W^{-1} )^{-\frac12} W^{-1} (x) (m_I W^{-1})^{-\frac12} \end{equation*} then
\begin{align} &\int_{\W{\Omega}_k \backslash \Omega_{k + 1}}   \left(S_{W^{-1}} (M_U \Phi) (x)\right)^2 \, dx \nonumber \\ & \gtrsim \sum_{j = 1}^n \int_{\W{\Omega}_k \backslash \Omega_{k + 1}} \sum_{\varepsilon \in \S} \sum_{I \in B_k} \frac{|W^{-\frac12} (x) (M_U \Phi)_I ^\varepsilon \vec{e}_j |^2}{|I|} 1_I(x)  \, dx  \nonumber \\ & = \sum_{j = 1}^n \int_{\W{\Omega}_k \backslash \Omega_{k + 1}} \sum_{\varepsilon \in \S} \sum_{I \in B_k} \frac{1}{|I|}\left\langle W_I(x)  (m_I W^{-1})^\frac12 (M_U \Phi)_I ^\varepsilon \vec{e}_j, \right. \nonumber  \\ & \qquad  \left. (m_I W^{-1})^\frac12 (M_U \Phi)_I ^\varepsilon \V{e}_j \right\rangle_{\C} 1_I(x)  \, dx \nonumber \\ & =  \sum_{j = 1}^n  \sum_{\varepsilon \in \S}  \sum_{I \in B_k} \int_{I \cap(\W{\Omega}_k \backslash \Omega_{k + 1})} \frac{1}{|I|} \left\langle W_I(x)  (m_I W^{-1})^\frac12 (M_U \Phi)_I ^\varepsilon \vec{e}_j,  \right. \nonumber \\ &  \qquad  \left. (m_I W^{-1})^\frac12 (M_U \Phi)_I ^\varepsilon \V{e}_j \right\rangle_{\C}   \, dx  \nonumber \\ &  = \sum_{j = 1}^n  \sum_{\varepsilon \in \S} \sum_{I \in B_k} \int_{I \cap(\W{\Omega}_k \backslash \Omega_{k + 1})} \frac{1}{|I|}{|W_I ^\frac12 (x)  (m_I W^{-1})^\frac12 (M_U \Phi)_I ^\varepsilon \vec{e}_j|^2}   \, dx \nonumber \\ & =  \sum_{j = 1}^n  \sum_{\varepsilon \in \S} \sum_{I \in B_k}  \frac{|(m_I W^{-1})^\frac12 (M_U \Phi)_I ^\varepsilon \vec{e}_j|^2}{|I|}   \int_{I \cap(\W{\Omega}_k \backslash \Omega_{k + 1})} |W_I ^\frac12 (x) \V{e}_{I, j}|^2   \, dx \label{H1BMOProofEst} \end{align}  where  \begin{equation*} \V{e}_{I, j} = \left\{
     \begin{array}{lr}
       \frac{(m_I W^{-1})^\frac12 (M_U \Phi)_I ^\varepsilon \vec{e}_j}{|(m_I W^{-1})^\frac12 (M_U \Phi)_I ^\varepsilon \vec{e}_j|} & \text{ if } (m_I W^{-1})^\frac12 (M_U \Phi)_I ^\varepsilon \vec{e}_{I, j}  \neq 0\\
       0 & \text{ if } (m_I W^{-1})^\frac12 (M_U \Phi)_I ^\varepsilon \vec{e}_{I, j}  = 0.
     \end{array}
   \right.
\end{equation*}

However, since $I \subseteq B_K$ we have \begin{align*} \eqref{H1BMOProofEst} & = \sum_{j = 1}^n  \sum_{\varepsilon \in \S}  \sum_{I \in B_k}  \frac{|(m_I W^{-1})^\frac12 (M_U \Phi)_I ^\varepsilon \vec{e}_j|^2}{|I|}   \int_{I  \backslash \Omega_{k + 1}} |W_I ^\frac12 (x) \V{e}_{I, j}|^2   \, dx \\ & \geq \sum_{j = 1}^n  \sum_{\varepsilon \in \S} \sum_{I \in B_k}  \frac{|(m_I W)^{-\frac12} (M_U \Phi)_I ^\varepsilon \vec{e}_j|^2}{|I|}   \int_{I  \backslash \Omega_{k + 1}} |W_I ^\frac12 (x) \V{e}_{I, j}|^2   \, dx.\end{align*}

   Now by Lemma $3.5$ in \cite{TV}, we have that $W_I$ for each $I \in \D$ is a matrix A${}_2$ weight with the same A${}_2$ characteristic as that of $W$.  Furthermore, since each of the nonzero $\V{e}_{I, j}$ are unit vectors, each $|W_I ^\frac12 (x) \V{e}_{I, j}|^2$ is a scalar A${}_2$ weight with A${}_2$ characteristic no greater than that of $W$ (see the proof of Lemma $3.6$ in \cite{TV}). Thus, since \begin{equation*} |I \backslash \Omega_{k + 1}| \geq \frac12 |I| \end{equation*} we have by standard arguments in the theory of (scalar) weighted norm inequalities that \begin{equation*} \frac{|W_I ^\frac12  \V{e}_{I, j}|^2 (I \backslash \Omega_{k + 1})}{|W_I ^\frac12  \V{e}_{I, j}|^2(I)} \geq \frac{1}{\Atwo{W}} \left(\frac{|I \backslash \Omega_{k + 1}|}{|I|}\right)^2 \geq \frac{1}{4 \Atwo{W}}. \end{equation*}

   Furthermore,  \begin{equation*} \frac{|W_I ^\frac12  \V{e}_{I, j}|^2(I)}{|I|} = \frac{1}{|I|} \int_I \ip{(m_I W^{-1})^{-\frac12} W^{-1}(x) (m_I W^{-1})^{-\frac12} \V{e}_{I, j} }{\V{e}_{I, j}}_{\C} \, dx  = 1 \end{equation*} for each nonzero $\V{e}_{I, j}$, which clearly proves \eqref{estBk}.

   Finally combining \eqref{estPhi} with \eqref{estBk} and using the standard $L^{1, \infty}$ maximal function boundedness, we have \begin{align*}  |\ip{\Phi}{B}_{L^2}| & \lesssim  \Atwo{W} ^\frac12 \|B\|_{\BMOWUtwo}  \sum_{k \in \Z}  |\W{\Omega}_k|  2^{k } \\ & \lesssim \Atwo{W} ^\frac12 \|B\|_{\BMOWUtwo}  \sum_{k \in \Z}  |\Omega_k|  2^{k } \\ & \leq \Atwo{W} ^\frac12 \|B\|_{\BMOWUtwo}\|S_{W^{-1}} (M_U \Phi)\|_{L^1}. \end{align*}

Conversely let $\ell \in (H^1 _{W, U})^*$  and let $\{E_j\}_{j = 1}^{n^2}$ be the standard orthonormal basis of $n \times n$ matricies under the inner product $\ip{}{}_{\tr}$.  Clearly if $\Phi \in H^1 _{W, U}$ then the Haar expansion of $\Phi$ converges to $\Phi$ in $H^1 _{W, U}$ so by continuity and linearity we have \begin{equation*} \ell(\Phi)   = \sum_{j = 1}^{n^2} \sum_{\varepsilon \in \S} \sum_{I \in \D}  \ip{\Phi_I ^\varepsilon}{E_j}_{\tr} \ell(E_j h_I ^\varepsilon)  = \ip{\Phi}{B}_{L^2}  \end{equation*} where \begin{equation*} B = \sum_{j = 1}^{n^2} \sum_{\varepsilon \in \S} \sum_{I \in \D} \overline{\ell(E_j h_I ^\varepsilon)} h_I ^\varepsilon E_j ^*   \end{equation*} so that the proof will be complete if we can show that $B \in \BMOWUtwo$.

To that end, by duality we have \begin{align*} & \left(\frac{1}{|J|}  \sum_{\varepsilon \in \S}\sum_{I \in \D(J)} \|(m_I W)^{\frac12} B_I ^\varepsilon (m_I U)^{-\frac12} \|^2\right)^\frac12  \\ & = \left(\frac{1}{|J|} \sum_{\varepsilon \in \S}\sum_{I \in \D(J)} \| (m_I U)^{-\frac12} (B_I ^\varepsilon)^* (m_I W)^{\frac12} \|^2 \right)^\frac12 \\ & \leq \frac{1}{|J|^\frac12} \sup_{\|\{S_I ^\varepsilon\}\|_{\ell^2} = 1} \left|\sum_{\varepsilon \in \S} \sum_{I \in \D(J)} \ip{(m_I U)^{-\frac12} (B_I ^\varepsilon)^* (m_I W)^{\frac12}}{S_I ^\varepsilon}_{\tr}\right| \\ & \leq \frac{1}{|J|^\frac12} \sup_{\|\{S_I ^\varepsilon\}\|_{\ell^2} = 1}  \left|\sum_{\varepsilon \in \S} \sum_{I \in \D(J)} \ip{(m_I W)^{\frac12} (S_I ^\varepsilon)^* (m_I U)^{-\frac12}}{B_I ^\varepsilon}_{\tr}\right| \\ & = \frac{1}{|J|^\frac12} \sup_{\|\{S_I ^\varepsilon\}\|_{\ell^2} = 1} |\ip{S_{J, W, U} ^*}{B}_{L^2}| \\ & \leq  \sup_{\|\{S_I ^\varepsilon\}\|_{\ell^2} = 1} \frac{1}{|J|^\frac12} \|\ell\|  \|S_{J, W, U} ^*\|_{H^1 _{W, U}} \end{align*} where \begin{equation*}S_{J, W, U} =  \sum_{\varepsilon \in \S} \sum_{I \in \D(J)}(m_I U)^{-\frac12} S_I ^\varepsilon (m_I W)^{\frac12} h_I ^\varepsilon .\end{equation*}   The proof will then be completed (and the interchanging of $\ell$ and summation will be justified) if we can show that $\|S_{J, W, U} ^*\|_{H^1 _{W, U}} \lesssim  |J|^\frac12.$

However, by  Cauchy-Schwarz, \begin{align*}  \|S_{J, W, U} ^*\|_{H^1 _{W, U}} & = \int_J  \left(\sum_{\varepsilon \in \S} \sum_{I \in \D(J)} \frac{\|W^{-\frac{1}{2}} (x) (m_I W)^{\frac12} (S_I^\varepsilon)^* \|^2  }{|I|} 1_I (x) \right)^\frac12 \, dx \\ & \leq |J|^\frac12 \left(\sum_{\varepsilon \in \S} \sum_{I \in \D(J)} \frac{1}{|I|} \int_I \|W^{-\frac{1}{2}} (x) (m_I W)^{\frac12} (S_I^\varepsilon)^* \|^2  \, dx \right)^\frac12 \\ & \leq \Atwo{W} ^\frac12 |J|^\frac12 \left(\sum_{\varepsilon \in \S} \sum_{I \in \D(J)} \|S_I^\varepsilon\| ^2 \right)^\frac12  \leq \Atwo{W} ^\frac12 |J|^\frac12 \end{align*} since $\|\{S_I ^\varepsilon\}\|_{\ell^2} = 1$. \hfill $\square.$

\section{Completion of the proofs}  In this section we will complete the proof of Theorems \ref{CommutatorThm}  and \ref{MatrixWeightThm}.  First, note that it is a by now standard fact that for any cube $Q$ there exists $1\leq t\leq 2^d $ and $Q_t\in \D^t$ such that $Q\subset Q_t$ and $\ell(Q_t)\leq 6\ell(Q)$ where $\ell(Q)$ is the side length of $Q$ and $\D^t = \{2^{-k} ([0, 1) ^d + m  + (-1)^k t)  : k \in \Z, m \in \Z^d\}$.  As was mentioned before, Theorem \ref{CommutatorThm} will be completed by the following.
 \begin{lemma} \label{JNlemma} If $1 < p < \infty, \D$ is a dyadic grid  and $W, U$  are matrix A${}_p$ weights, then we have $\BMOWUD = \BMOWUAltD$.  Furthermore we have that \begin{equation*} \bigcup_{t = 1}^{2^d} \BMOWUDt = \BMOWU \text{ and } \bigcup_{t = 1}^{2^d} \BMOWUAltDt = \BMOWUAlt \end{equation*}

 \end{lemma}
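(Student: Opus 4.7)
The plan is to establish the equivalence $\BMOWUD = \BMOWUAltD$ on a fixed dyadic grid (the substantive content), then deduce the union identities via the standard one-third trick over the $2^d$ shifted grids. Throughout, I would freely replace the matrix averages $m_I W^{1/p}$ and $m_I U^{1/p}$ with the reducing operators $V_I(W), V_I(U)$ via Lemma \ref{RedOp-AveLem}, noting that the norm equivalence $|T\vec{e}|\approx |S\vec{e}|$ for positive definite $T, S$ transfers to their inverses.

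For the inclusion $\BMOWUAltD \subseteq \BMOWUD$, I would start from condition $(b')$ of Corollary \ref{BMOCont} and use the pointwise bound
\begin{equation*}
\|(m_I W^{1/p})(B(x) - B_I)(V_I(U))^{-1}\| \leq \frac{1}{|I|}\int_I \|W^{1/p}(y) W^{-1/p}(x)\|\,\|W^{1/p}(x)(B(x) - B_I)(V_I(U))^{-1}\| dy.
\end{equation*}
Raising to the $(1+\epsilon)$ power, integrating in $x$ over $I$, and applying H\"older with exponents $p/(1+\epsilon)$ and $p/(p-1-\epsilon)$ splits the estimate into the $(b')$ factor (which is bounded by hypothesis) and a reverse-H\"older type term controlled by $(\frac{1}{|I|^2}\int_I\int_I \|W^{1/p}(y)W^{-1/p}(x)\|^{s} dy\,dx)^{(p-1-\epsilon)/p}$ with $s = (1+\epsilon)p/(p-1-\epsilon)$. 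For small $\epsilon$, $s$ is close to $p'$, so by the matrix $\text{A}_p$ condition combined with the reverse H\"older inequality for matrix $\text{A}_p$ weights, the second factor is finite uniformly in $I$, producing the $\BMOWUD$ condition for the chosen $\epsilon>0$.

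For the harder inclusion $\BMOWUD \subseteq \BMOWUAltD$, I would verify condition $(b)$ of Theorem \ref{CarEmbedThm}. Fix $J \in \D$ and decompose $\D(J) = \bigsqcup_j \F^j(J)$ via the stopping time $\{\J^j(J), \F^j(J)\}$ from Section 2: within each stopping region $\F(K)$, one has $V_I(W)\approx V_K(W)$ and $V_I(U)\approx V_K(U)$ up to constants depending on $\lambda$. Replacing $V_I$ by $V_K$ on $\F(K)$, extending the Haar sum from $\F(K)$ to $\D(K)$, and invoking matrix Plancherel together with $\|\cdot\|_F\approx\|\cdot\|$ reduces the Carleson sum on $\F(K)$ to
\begin{equation*}
\int_K \|V_K(W)(B(x) - m_K B)(V_K(U))^{-1}\|^2 dx \approx \int_K \|(m_K W^{1/p})(B(x) - m_K B)(m_K U^{1/p})^{-1}\|^2 dx.
\end{equation*}
Summing over $K$ and $j$ using $|\bigcup \J^j(J)|\leq 2^{-j}|J|$ then produces a bound of order $|J|$, provided the averaged $L^2$ bound above holds uniformly in $K$. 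The main obstacle is thus bootstrapping the averaged $L^{1+\epsilon}$ hypothesis of $\BMOWUD$ to this averaged $L^2$ estimate, a matrix John-Nirenberg type self-improvement in the two weight setting (hence the ``JN'' in the lemma's name). I would carry this out by a Calder\'on-Zygmund decomposition of the scalar function $F_K(x) := \|(m_K W^{1/p})(B(x) - m_K B)(m_K U^{1/p})^{-1}\|$ at geometric thresholds, iterated so that the passage from the averages $m_{K'}(\cdot)$ at a parent to those at a CZ child is controlled by the same stopping time $\J$ already in use; this produces the required exponential decay of superlevel sets and hence $L^q$-integrability of $F_K$ for all $q<\infty$.

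Finally, the union identity $\bigcup_t \BMOWUDt = \BMOWU$ (and likewise for $\BMOWUAltDt$) follows from the one-third trick recalled at the start of the section: any cube $Q\subset\Rd$ sits inside some $Q_t\in\D^t$ of comparable size, and the matrix $\text{A}_p$ property together with Lemma \ref{RedOp-AveLem} makes $m_Q W^{1/p}$ and $(m_Q U^{1/p})^{-1}$ comparable (at the operator level) to $m_{Q_t} W^{1/p}$ and $(m_{Q_t} U^{1/p})^{-1}$. Hence the $\BMOWU$ integrand averaged over $Q$ is dominated by a dimensional multiple of that over $Q_t$, giving the forward containment; the reverse is immediate since each $\D^t$ lies within the collection of all cubes.
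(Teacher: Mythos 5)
The easy inclusion $\BMOWUAltD \subseteq \BMOWUD$ you sketch is essentially correct, though slightly more cumbersome than the paper's: after using Lemma~\ref{RedOp-AveLem} to trade $m_I W^{1/p}$ for $V_I(W)$, a single H\"older step against $\|V_I(W)W^{-1/p}(x)\|^{p(1+\epsilon)/(p-1-\epsilon)}$ and the reverse H\"older inequality suffice; your double integral in $x$ and $y$ is avoidable but not wrong. The union identities via the one-third trick together with Lemma~\ref{RedOp-AveLem} are also the same as the paper's.

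The genuine gap is in the hard inclusion $\BMOWUD \subseteq \BMOWUAltD$. Your reduction via the stopping time (replacing $V_J$ by $V_K$ on $\F(K)$, extending to $\D(K)$, summing using $|\bigcup\J^j(J)|\le 2^{-j}|J|$) is structurally correct and does parallel the paper, but you correctly identify that it leaves you needing to bootstrap the averaged $L^{1+\epsilon}$ bound on $\|(m_K W^{1/p})(B-m_K B)(m_K U^{1/p})^{-1}\|$ to an averaged $L^2$ bound, and your proposed resolution --- a Calder\'on--Zygmund decomposition giving a matrix two-weight John--Nirenberg self-improvement --- is both unproven and likely circular. In the paper, that kind of self-improvement (Proposition~\ref{p=2JN}) is deduced \emph{from} Lemma~\ref{JNlemma}, not used to prove it, and even then the $\epsilon$ is never removed, which is strong evidence that a clean CZ iteration as you describe is not available: the averages $m_{K'}W^{1/p}$, $m_{K'}U^{1/p}$ change at each CZ subcube, and controlling them requires exactly the stopping time you already used, so the two iterations do not decouple. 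The paper bypasses the whole issue by working at the level of the \emph{scalar} Carleson sequence $a_J^\varepsilon=\|V_J(W)B_J^\varepsilon (V_J(U))^{-1}\|^2$: Theorem~3.1 of [NTV] characterizes the $L^\infty$ Carleson condition (b) of Theorem~\ref{CarEmbedThm} as equivalent to the much weaker condition that the square function $\bigl(\sum_{\varepsilon}\sum_{J\in\D(I)}\frac{a_J^\varepsilon}{|J|}\chi_J\bigr)^{1/2}$ has uniformly bounded $L^{1+\epsilon}$-averages over cubes $I$. That weaker target is then reached directly from the $\BMOWUD$ hypothesis and the stopping time, using that the exponent $(1+\epsilon)/2<1$ makes $t\mapsto t^{(1+\epsilon)/2}$ subadditive, so the generation sum $\sum_j\sum_{K\in\J^{j-1}(I)}$ passes outside the power without any change of exponent. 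No $L^2$ estimate on $B$ is ever needed. You should replace your CZ/JN bootstrap with this appeal to [NTV, Theorem~3.1] plus subadditivity of $t^{(1+\epsilon)/2}$.
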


\begin{proof} Let $B \in \BMOWUD$ so for some $\epsilon > 1$ (which by H\"{o}lder's inequality we assume is in the interval $(0, 1)$) we have by dyadic Littlewood-Paley theory that \begin{align} \sup_{\substack{I \subset \R^d \\  I \text{ is a cube}}} & \frac{1}{|I|} \int_{I} \left( \sum_{\varepsilon \in \S} \sum_{J \in \D(I)} \frac{\| (m_I W ^\frac{1}{p}) B_J ^\varepsilon (m_I U^{\frac{1}{p}})^{-1} \| ^2}{|J|} \chi_J (x)  \right) ^\frac{1 + \epsilon}{2} \, dx \nonumber \\ & \approx \sup_{\substack{I \subset \R^d \\  I \text{ is a cube}}}  \frac{1}{|I|} \int_{I} \left( \sum_{\varepsilon \in \S} \sum_{J \in \D(I)} \frac{\| V_I (W) B_J ^\varepsilon (V_I (U))^{-1} \| ^2}{|J|} \chi_J  (x)  \right) ^\frac{1 + \epsilon}{2} \, dx \nonumber \\ & < \infty \nonumber   \label{BAssump} \end{align} where we have used Lemma \ref{RedOp-AveLem} twice.   However, we have that $B \in \BMOWUAltD$ if and only if \begin{equation*} \sup_{\substack{I \subset \R^d \\  I \text{ is a cube}}} \, \frac{1}{|I|} \sum_{\varepsilon \in \S} \sum_{J \in \D(I)}  \|V_J (W) B_J ^\varepsilon (V_J (U)) ^{-1}\|^2 < \infty \end{equation*}  which by Theorem 3.1 in \cite{NTV} is equivalent to \begin{equation}  \label{Btoprove} \sup_{\substack{I \subset \R^d \\  I \text{ is a cube}}}  \frac{1}{|I|} \int_{I} \left( \sum_{\varepsilon \in \S}  \sum_{J \in \D(I)} \frac{\|V_J (W) B_J ^\varepsilon (V_J (U)) ^{-1}\|^2 }{|J|} \chi_J  (x)  \right) ^\frac{1 + \epsilon}{2} \, dx < \infty. \end{equation} Using the stopping time notation from Section $2$, note that  $J \in \F(K)$ implies that $\|V_J (W) (V_K (W))^{-1}\| \lesssim 1$ and $\|V_K (U) (V_J(U))^{-1}\| \lesssim 1$, so that for fixed $I \in \D$,

\begin{align}
&  \frac{1}{|I|} \int_{I} \left( \sum_{\varepsilon \in \S} \sum_{J \in \D(I)} \frac{\|V_J (W) B_J ^\varepsilon (V_J(U)) ^{-1} \| ^2}{|J|} \chi_J  (x)  \right) ^\frac{1 + \epsilon}{2} \, dx \nonumber \\ & =
\frac{1}{|I|} \int_I \left( \sum_{j = 1} ^\infty \sum_{K \in \J^{j - 1} (I)} \sum_{\varepsilon \in \S} \sum_{J \in \F(K)}  \frac{\|V_J (W) B_J ^\varepsilon (V_J (U)) ^{-1} \|^2 }{|J|} \chi_J  (x) \right) ^\frac{1 + \epsilon}{2} \, dx \nonumber \\ & \lesssim \frac{1}{|I|}  \int_I \left(  \sum_{j = 1} ^\infty \sum_{K \in \J^{j - 1} (I)}  \sum_{\varepsilon \in \S} \sum_{J \in \D(K)} \frac{\|V_K(W) B_J ^\varepsilon  (V_K (U)) ^{-1} \|^2 }{|J|} \chi_J  (x) \right) ^ \frac{1 + \epsilon}{2} \, dx \nonumber \\ & \leq  \frac{1}{|I|}    \sum_{j = 1} ^\infty \sum_{K \in \J^{j - 1} (I)} \int_K \left( \sum_{\varepsilon \in \S} \sum_{J \in \D(K)} \frac{\|V_K (W) B_J ^\varepsilon (V_K (U)) ^{-1} \|^2 }{|J|} \chi_J  (x) \right) ^\frac{1 + \epsilon}{2} \, dx \nonumber \\ & \lesssim \frac{1}{|I|}   \sum_{j = 1} ^\infty \sum_{K \in \J^{j - 1} (I)} |K|   \lesssim \sum_{j = 1}^\infty 2^{-(j - 1)} < \infty. \nonumber \end{align}

Conversely, for $\epsilon > 0$ small enough we have \begin{align*} & \frac{1}{|I|} \int_I \|V_I (W)(B(x) - m_I B) (V_I (U))^{-1}\|^{1 + \epsilon} \, dx \\ & \leq \frac{1}{|I|} \int_I \|V_I (W) W^{-\frac{1}{p}} (x)\| ^{1 + \epsilon}
\|W^{\frac{1}{p}}(x) (B(x) - m_I B) (V_I (U))^{-1}\|^{1 + \epsilon} \, dx \\ & \leq \left(\frac{1}{|I|} \int_I \|V_I (W) W^{-\frac{1}{p}} (x)\|^{\frac{p(1 + \epsilon)}{p - 1 - \epsilon}} \, dx \right)^{\frac{p - 1 - \epsilon}{p}} \\ & \qquad \times \left( \frac{1}{|I|} \int_I
\|W^{\frac{1}{p}}(x) (B(x) - m_I B) (V_I (U))^{-1}\|^{p} \, dx\right)^\frac{1 + \epsilon}{p} \\ & \lesssim \left( \frac{1}{|I|} \int_I \|W^{\frac{1}{p}}(x) (B(x) - m_I B) (V_I (U))^{-1}\|^{p} \, dx\right)^\frac{1 + \epsilon}{p}  \end{align*} by the reverse H\"{o}lder inequality.

As for the last two statements, one can argue as we did towards the end of the proof of Theorem \ref{RieszTransNec} and we will leave these simple details to the interested reader.  \end{proof}

We now prove Theorem \ref{MatrixWeightThm}\\

 \noindent \textit{Proof of Theorem \ref{MatrixWeightThm}:}

 Let $\Lambda$ be a matrix A${}_p$ weight and let $R$ be any of the Riesz transforms.  If $W = (U^* \Lambda^\frac{2}{p} U)^\frac{p}{2}$, then \begin{align*} \| R U\vec{f}\|_{L^p(\Lambda)}   & \lesssim  \|  U\vec{f}\|_{L^p(\Lambda)}    \left(\inrd |\Lambda ^\frac{1}{p} U \vec{f}|^p \, dx \right)^\frac{1}{p} = \left(\inrd \ip{U^*\Lambda ^\frac{2}{p} U \vec{f}}{ \vec{f}}_{\C} ^\frac{p}{2} \, dx \right)^\frac{1}{p}
   \\ &  = \left(\inrd |[ (U^* \Lambda ^\frac{2}{p} U)^\frac{p}{2}]^\frac{1}{p} \vec{f}| ^p \, dx \right)^\frac{1}{p}  = \|\vec{f}\|_{L^p(W)}. \end{align*}

 On the other hand, the easy computation above and the fact that $W$ is a matrix A${}_p$ weight gives us that \begin{align*}  \|  U R \vec{f}\|_{L^p(\Lambda)}  = \|R\vec{f}\|_{L^p(W)}  \lesssim \|\vec{f}\|_{L^p(W)}. \end{align*}

 \hfill $\square$.

As was mentioned in the introduction, it is rather curious to examine the very special case of $p = 2, U = W,$ and $\Lambda = W^{-1}$ where $W$ is a matrix A${}_2$ weight, which gives us that $W \in \text{BMO}_{W^{-1}, W}$ if $W$ is a matrix A${}_2$ weight.  Thanks to Theorem \ref{CarEmbedThm}, this in conjunction with some elementary linear algebra and the matrix A${}_2$ condition proves the following result.

\begin{proposition} \label{SummationProp}  If $W$ is a matrix A${}_2$ weight and $\D$ is a dyadic grid, then $W$ satisfies  \begin{equation} \label{FKP} \sup_{J \in \D} \frac{1}{|J|} \sum_{\varepsilon \in \S} \sum_{I \in \D(J)} \|(m_I W)^{-\frac12 } W_I ^\varepsilon (m_I W)^{-\frac12} \|^2 < \infty, \end{equation}  \begin{equation} \label{Buckley} \sum_{\varepsilon \in \S} \sum_{I \in \D(J)} W_I ^\varepsilon (m_I W)^{-1} W_I ^\varepsilon < C |J| m_J W \end{equation} for some $C$ independent of $J$, and \begin{equation} \label{IsralSummationProp}  \frac{1}{|J|}\sum_{\varepsilon \in \S} \sum_{I \in \D(J)}  m_I (W^{-1} ) W_I^\varepsilon m_I (W^{-1}) W_I ^\varepsilon m_I (W ^{-1})  < C m_J (W^{-1}) \end{equation} for some $C$ independent of $J$.

 \end{proposition}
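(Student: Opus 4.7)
The plan is to apply Theorem \ref{MatrixWeightThm} with the specific choices $p = 2$, $\Lambda = W^{-1}$, and $U = W$. Since $(U^* \Lambda^{2/p} U)^{p/2} = (W \cdot W^{-1} \cdot W) = W$ is matrix A${}_2$ by hypothesis, the theorem applies and yields $W \in \text{BMO}_{W^{-1}, W}^2$. By Lemma \ref{JNlemma}, this continuous BMO membership forces $W \in \widetilde{\text{BMO}}_{W^{-1}, W, \D}^2$ for the given dyadic grid $\D$, so all of the equivalent dyadic reformulations developed in Section $2$ are available for $B = W$.

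For \eqref{FKP}, I would apply condition (b) of Theorem \ref{CarEmbedThm} to the quadruplet $(W^{-1}, W, \{W_I^\varepsilon\}, 2)$; using the $p = 2$ identifications $V_I(W^{-1}) = (m_I W^{-1})^{1/2}$ and $V_I(W) = (m_I W)^{1/2}$, this yields the Carleson bound on $\|(m_I W^{-1})^{1/2} W_I^\varepsilon (m_I W)^{-1/2}\|^2$. To pass to the symmetric expression in \eqref{FKP} I would invoke the operator equivalence $(m_I W)^{-1} \approx m_I(W^{-1})$ valid under matrix A${}_2$: one direction is Jensen's operator inequality for the operator-convex map $t \mapsto t^{-1}$, and the reverse is the standard matrix A${}_2$ bound $\|(m_I W)^{1/2}(m_I W^{-1})^{1/2}\|^2 \lesssim \Atwo{W}$. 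Since $W_I^\varepsilon$ is self-adjoint (as $W$ is), the identity $\|A\|^2 = \|A^*A\|$ recasts both quantities as $\|(m_I W)^{-1/2} W_I^\varepsilon\, M\, W_I^\varepsilon (m_I W)^{-1/2}\|$ with either $M = m_I(W^{-1})$ or $M = (m_I W)^{-1}$, and the operator equivalence produces the comparison summand by summand.

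For \eqref{Buckley}, I would invoke the operator form \eqref{Buckleypneq2} of condition (b), which with our substitutions and self-adjointness of $W_I^\varepsilon$ reads
\[
\sum_\varepsilon \sum_{I \in \D(K)} W_I^\varepsilon (m_I W^{-1}) W_I^\varepsilon \leq C \, m_K W;
\]
replacing the sandwiched $(m_I W^{-1})$ by $(m_I W)^{-1}$ via the same A${}_2$ operator equivalence (with positivity of the summands preserved by self-adjointness of $W_I^\varepsilon$) yields \eqref{Buckley} directly. Condition \eqref{IsralSummationProp} requires no such conversion: I would apply Proposition \ref{HLWIsralProp} with $W \to W^{-1}$, $U \to W$, and $B = W$, using self-adjointness of $W_I^\varepsilon$ and the identity $W^{-1}(J) = |J|\,m_J(W^{-1})$; the resulting inequality is literally \eqref{IsralSummationProp}.

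The only substantive step is the BMO membership $W \in \text{BMO}_{W^{-1}, W}^2$, which is precisely Theorem \ref{MatrixWeightThm}; everything else is linear-algebraic bookkeeping with the matrix A${}_2$ operator equivalences and the various equivalent reformulations of the Carleson-type condition already assembled in Section $2$, so no serious obstacle is anticipated beyond tracking self-adjointness of $W_I^\varepsilon$ and the two-sided equivalence $(m_I W)^{-1} \approx m_I(W^{-1})$.
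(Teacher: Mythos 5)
Your proposal is correct and follows the paper's approach exactly: the paper's "proof" of Proposition \ref{SummationProp} is merely a pointer — apply Theorem \ref{MatrixWeightThm} with $p=2$, $\Lambda = W^{-1}$, $U=W$ to get $W \in \text{BMO}_{W^{-1},W}^{2}$, then invoke Theorem \ref{CarEmbedThm} together with elementary linear algebra and the matrix $\text{A}_2$ condition — and your proposal correctly fills in precisely those details. The only nitpick is a mislabel: \eqref{Buckleypneq2} is the operator reformulation of condition (c) (i.e.\ $\MC{B}(W,U,A,p) < \infty$), not of condition (b); since (b) and (c) are equivalent at $p=2$ this is harmless, and you also only actually need the Jensen direction $(m_I W)^{-1} \leq m_I(W^{-1})$ for both \eqref{FKP} and \eqref{Buckley}, not the full two-sided $\text{A}_2$ comparison.
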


As was mentioned in the introduction, while \eqref{FKP} in the scalar setting is known as the Fefferman-Kenig-Pipher inequality and is known in the matrix setting (see \cite{TV} when $d = 1$ and \cite{CW} when $d > 1$), inequality \eqref{Buckley} is to the author's knowledge new (and in the scalar setting is well known as Buckley's inequality, see \cite{Buck}).  Note that the interest in these two inequalities stems from their use in sharp matrix weighted norm inequalities.  In particular, it was shown in \cite{TV, CW} that the supremum in \eqref{FKP} is comparable to $\log ( 1 + \Atwo{W})$, which in \cite{BPW, CW} is used to prove quantitative matrix weighted square function bounds.  Note that these square function bounds immediately give quantitative matrix weighted norm inequalities for Riesz transforms, and in particular give that \begin{equation} \label{BPWbound} \|R\|_{L^2(W) \rightarrow L^2(W)} \lesssim \Atwo{W} ^\frac32 \log (1 + \Atwo{W}) \end{equation} for any of the Riesz transforms $R$.

Furthermore, as was pointed out in \cite{BPW}, if one could prove that $C \approx \Atwo{W}^2$ in \eqref{Buckley} when $d = 1$ (which is known to be sharp in the scalar setting), then one would be able to improve the right hand side of \eqref{BPWbound} for the Hilbert transform to $\Atwo{W}^\frac32$ (and while not stated in \cite{BPW}, a similar statement can be said for any of the Riesz transforms). While this appears to be quite challenging, we can at least prove the following.   \begin{proposition} \label{BuckleyProp} The constant $C$ in \eqref{Buckley} can be picked to satisfy $C \approx \Atwo{W}^2 \log ( 1 + \Atwo{W}).$ \end{proposition}

\begin{proof} The proof is similar to the cases $(b) \Rightarrow (a)$ and $(a) \Rightarrow (c)$ in Theorem \ref{CarEmbedThm} but is simpler.  More precisely, as before let \begin{equation*} M_W ' \vec{f} (x) = \sup_{\D \ni I \ni x} m_I |  (m_I W)^{\frac12} W^{-\frac12} \vec{f}|. \end{equation*}  Then by the above mentioned bound for the supremum in \eqref{FKP} and the standard unweighted dyadic Carleson embedding theorem, we have for $\vec{f} \in L^2$
\begin{align*}   \sum_{\varepsilon \in \S} \sum_{I \in \D} & | (m_I W)^{-\frac12} W_I ^\varepsilon m_I (W^{-\frac12 \vec{f}})|^2
\\ & \leq  \sum_{\varepsilon \in \S} \sum_{I \in \D} \| (m_I W)^{-\frac12} W_I ^\varepsilon  (m_I W)^{-\frac12} \|^2   (m_I |  (m_I W)^{\frac12} W^{-\frac12} \vec{f}| )^2
\\ &  \leq  \sum_{\varepsilon \in \S} \sum_{I \in \D} \| (m_I W)^{-\frac12} W_I ^\varepsilon  (m_I W)^{-\frac12} \|^2   (m_I (M_W ' \vec{f}))^2
\\ & \lesssim \log ( 1 + \Atwo{W}) \| M_W ' \vec{f}\|_{L^2} ^2
\\ & \lesssim \|W\|_{\text{A}_2}^2 \log ( 1 + \Atwo{W}) \|\vec{f}\|_{L^2} ^2 \end{align*}

However, plugging in the testing function $\vec{f} = W^\frac12 \chi_J \vec{e}$ for some $J \in \D$ and $\vec{e} \in \C$ we get that

\begin{align*} \sum_{\varepsilon \in \S} \sum_{I \in \D(J)} | (m_I W)^{-\frac12} W_I ^\varepsilon \vec{e}|^2 & \lesssim \|W\|_{\text{A}_2}^2 \log ( 1 + \Atwo{W})  \|W^\frac12 \chi_J \vec{e} \|_{L^2} ^2
\\ & \lesssim \|W\|_{\text{A}_2}^2 \log ( 1 + \Atwo{W})  |J| | (m_J W)^\frac12 \vec{e} |^2 \end{align*} which is easily seen to be equivalent to \eqref{Buckley} with $C \lesssim \|W\|_{\text{A}_2}^2 \log ( 1 + \Atwo{W}). $ \end{proof}

Let us remark that even defining correct $p\neq 2$ versions of $\eqref{Buckley}$ or $\eqref{FKP}$ looks quite mysterious.  Also,  note that \eqref{IsralSummationProp} appears to be new even in the scalar weighted setting (which is possibly due to the fact that it is not clear where such an inequality can be used.)

We will finish this paper by proving Propositions \ref{p=2JN} and \ref{VectorJN}. \\

\noindent \textit{Proof of Proposition} \ref{p=2JN}:  Since $W$ is a matrix A${}_2$ weight iff $W^{-1}$ is a matrix A${}_2$ weight, the proof follows immediately  by the matrix A${}_2$ condition and Lemma \ref{RedOp-AveLem} in conjunction with the fact that BMO${}_{W^{-1}, W} ^2 = \widetilde{\text{BMO}_{W^{-1}, W} ^2}.$ \hfill $\square$ \\

\noindent To prove Proposition \ref{VectorJN}, we first need to recall that $|W^\frac{1}{p} (x) \V{e}|^p$ is  a scalar A${}_{\infty}$ weight for $\V{e} \in \C$ (with A${}_{\infty}$ constant uniform in $\V{e}$, see \cite{V}) and thus satisfies a reverse H\"{o}lder's inequality. \\

\noindent \textit{Proof of Proposition} \ref{VectorJN}:   We first assume that \eqref{JNVecCond} is true and that $W$ is a matrix A${}_p$ weight.  Then \begin{align*} \frac{1}{|I|} & \int_I |\V{f}(x) - m_I \V{f} | \, dx \\ & \leq  \frac{1}{|I|} \int_I \|V_I(W)  W^{-\frac{1}{p}} (x) \| |W^\frac{1}{p} (x) V_I ^{-1}(W) (\V{f}(x) - m_I \V{f}) | \, dx \\ & \lesssim  \left(\frac{1}{|I|} \int_I  |W^\frac{1}{p} (x) V_I ^{-1}(W) (\V{f}(x) - m_I \V{f}) |^p  \, dx \right)^\frac{1}{p}  \end{align*} by H\"{o}lder's inequality and the matrix A${}_p$ condition.

Conversely, if $W$ is a matrix A${}_{p, \infty}$ weight then there exists $q > p$ where \begin{align*} \left(\frac{1}{|I|}  \int_I \|W^\frac{1}{p} (x) (V_I(W))^{-1} \|^q \right)^\frac{1}{q} & \lesssim \sum_{i = 1}^n \left(\frac{1}{|I|} \int_I |W^\frac{1}{p} (x) (V_I(W))^{-1} \V{e}_i |^q \right)^\frac{1}{q} \\ & \lesssim \sum_{i = 1}^n \left(\frac{1}{|I|} \int_I |W^\frac{1}{p} (x) (V_I(W))^{-1} \V{e}_i |^p \right)^\frac{1}{p} \lesssim 1. \end{align*} Thus, if $\V{f} \in \text{BMO}$ then the classical John-Nirenberg inequality and the following inequality completes the proof:  \begin{align*} \frac{1}{|I|} & \int_I  |W^\frac{1}{p} (x) V_I ^{-1}(W) (\V{f}(x) - m_I \V{f}) |^p  \, dx  \\ & \lesssim \left(\frac{1}{|I|} \int_I |\V{f}(x) - m_I \V{f}| ^{ \frac{q}{q - p}} \, dx \right)^\frac{q-p}{q}. \end{align*}     \hfill $\square$


\begin{thebibliography}{1}

\begin{comment}
\bib{ABKP}{article}{
    author={Alvarez, J.},
    author={Bagby, R.},
    author={Kurtz, D.},
    title={Weighted estimates for commutators of linear operators.},
    journal={Studia Math.},
    volume={104},
    date={1993},
    pages={195 - 209},
    review={\MR{1211818 }}
  }



\bib{Bez}{article}{
    author={Beznosova, O.},
    title={Linear bound for the dyadic paraproduct on weighted Lebesgue space $L_2(w)$},
    journal={J. Funct. Anal.},
    volume={255},
    date={2008},
    pages={994 - 1007},
    review={\MR{2433959}}

}






\bib{BW}{article}{
author={Bickel, K.},
author={Wick, B,},
title={A Study of the Matrix Carleson Embedding Theorem with Applications to Sparse Operators},
journal={Preprint available at \href{http://arxiv.org/abs/1503.06493}{http://arxiv.org/abs/1503.06493}.},
}

\bib{BW1}{article}{
author={Bickel, K.},
author={Wick, B,},
title={Well-Localized Operators on Matrix Weighted $L^2$ Spaces},
journal={Preprint available at \href{http://arxiv.org/abs/1407.3819}{http://arxiv.org/abs/1407.3819}.},
}

\bib{B}{article}{
    author={Blasco, O.},
    title={Dyadic BMO, paraproducts and Haar multipliers},
    journal={Contemp. Math.},
    volume={445},
    date={2007},
    pages={11 - 18},
    review={\MR{2381883}}
  }

\end{comment}





\bibitem{BPW}
 K. Bickel, S. Petermichl,  B.  Wick, \textit{Bounds for the Hilbert Transform with Matrix $A_2$ Weights}, J. Funct. Anal. \textbf{270} (2016), no. 5, 1719 - 1743.

\bibitem{BW}
K. Bickel \and  B. Wick, \textit{A Study of the Matrix Carleson Embedding Theorem with Applications to Sparse Operators}, J. Math. Anal. Appl. \textbf{435} (2016), no. 1, 229 - 243.

\bibitem{B}
S. Bloom, \textit{A commutator theorem and weighted BMO},  Trans. Amer. Math. Soc. \textbf{292}  (1985), 103 - 122.


% \bibitem{Bow}
 %{\bibname M. Bownik}, 'Inverse volume inequalities for matrix weights,' {\em Indiana Univ. Math. J.} {50}  (2001),  383--410.

\bibitem{Buck}
 S. Buckley, \textit{Summation conditions on weights},  Michigan Math. J. \textbf{40} (1993), 153 - 170.


\bibitem{CT} A. Culiuc \and S. Treil,  \textit{The Carleson Embedding Theorem with matrix weights},  Preprint  available at \href{http://arxiv.org/abs/1508.01716}{http://arxiv.org/abs/1508.01716}.


\bibitem{CW}
 A. Culiuc \and B. Wick,  \textit{A proof of the boundedness of the Riesz and Ahlfors-Beurling transforms in matrix weighted spaces}, {\em Preprint}.





\bibitem{G}
 M. Goldberg,  \textit{Matrix $A_p$ weights via maximal functions,}  Pacific J. Math. \textbf{211} (2003), 201 - 220.


\bibitem{H}
Hyt\"{o}nen, T., \textit{Representation of singular integrals by dyadic operators, and the $A_2$ theorem},  Preprint available at \href{http://arxiv.org/abs/1108.5119v1}{http://arxiv.org/abs/1108.5119v1}.


\bibitem{HLW0}
 I. Holmes,  M. Lacey,  and B. Wick, \textit{Bloom's inequality: commutators in a two-weight setting},  Archiv der Mathematik \textbf{106}
(2016), Issue 1,  53 - 63.


\bibitem{HLW}
I. Holmes,  M. Lacey,   and B. Wick, \textit{Commutators in the two-weight setting,} Math. Ann. \textbf{367} (2017), no. 1-2, 51 - 80.




\bibitem{I1}
 J. Isralowitz, \textit{Matrix  weighted Triebel-Lizorkin bounds: a simple stopping time proof}, Preprint available at \href{http://arxiv.org/abs/1507.06700}{http://arxiv.org/abs/1507.06700}.


\bibitem{I2}
J. Isralowitz, \textit{A matrix weighted $T1$ theorem for matrix kernelled CZOs and a matrix weighted John-Nirenberg theorem},  Preprint available at \href{http://arxiv.org/abs/1508.02474}{http://arxiv.org/abs/1508.02474}.



\bibitem{IKP}
 J. Isralowitz, H. K. Kwon, \and S. Pott, \textit{Matrix weighted norm inequalities for commutators and paraproducts with matrix symbols},  Preprint available at \href{http://arxiv.org/abs/1401.6570}{http://arxiv.org/abs/1401.6570}.



\bibitem{IM}
J. Isralowitz \and K. Moen,  \textit{Matrix weighted Poincare inequalities and applications to degenerate elliptic systems}, Preprint available at \href{https://arxiv.org/abs/1601.00111}{https://arxiv.org/abs/1601.00111}.




\bibitem{J}
S. Janson, \textit{Mean oscillation and commutators of singular integral operators} Ark. Mat. \textbf{16} (1978), 263 - 270.









\bibitem{KP}
N. H. Katz and  M. C. Pereyra, \textit{Haar multipliers, paraproducts, and weighted inequalities},  Appl. Numer. Harmon. Anal., {Analysis of divergence} (1997), 145 - 170.





\bibitem{LLL}
 M-Y. Lee, C-C. Lin, and Y-C. Lin, \textit{A wavelet characterization for the dual of the weighted Hardy spaces},   Proc. Amer. Math. Soc. {137} (2009), 4219 - 4225.


\bibitem{LPPW}
 M. Lacey, S. Petermichl, J. Pipher, and B. Wick, \textit{Iterated Riesz commutators: a simple proof of boundedness},  Contemp. Math. \textbf{505} (2010), 171 - 178.





\bibitem{MW}
 B. Muckenhoupt and R. Wheeden, \textit{Weighted bounded mean oscillation and the Hilbert transform},  Studia Math. \textbf{54} (1975/76), 221 - 237.


\bibitem{NT}
 F. Nazarov \and S. Treil, \textit{The hunt for a Bellman function: applications to estimates for singular integral operators and to other classical problems of harmonic analysis},   Algebra i Analiz  \textbf{8} (1996) 32 - 162.


\bibitem{NTV}
F. Nazarov, S. Treil,  and A. Volberg, \textit{The $Tb$-theorem on non-homogeneous spaces},  Acta Math.  \textbf{190}  (2003),  151 - 239.



\bibitem{P}
M. C. Pereyra,  \textit{Lecture notes on dyadic harmonic analysis},   Contemp. Math. \textbf{289} (2000), {1--60}.

\bibitem{Pott}
 S. Pott, \textit{A sufficient condition for the boundedness of operator-weighted martingale transforms and Hilbert transform},  Studia Math. \textbf{182} (2007), 99 - 111.






\bibitem{R}
 S. Roudenko, \textit{Matrix-weighted Besov spaces}, Trans. Amer. Math. Soc. \textbf{355} (2003), {273 - 314}.




\bibitem{TV}
 S. Treil and A. Volberg, \textit{Wavelets and the angle between past and future}, J. Funct. Anal. \textbf{143} (1997), 269 - 308.

\bibitem{V}
 A. Volberg,  \textit{Matrix A${}_p$ weights via $S$-functions}, J. Amer. Math. Soc.  \textbf{10} (1997),  445 - 466.


\end{thebibliography}
\end{document}